\newcommand{\wt}{\widetilde}
\newcommand{\Ker}{\mathop{\rm ker}}
\renewcommand{\dim}{\mathop{\rm dim}}
\newcommand{\ov}{\overline}
\newcommand{\ve}{\varepsilon}
\newcommand{\1}{1\!\!\,{\rm I}}
\newcommand{\mbR}{{\mathbb R}}
\newcommand{\cK}{{\mathcal K}}
\newcommand{\cT}{{\mathcal T}}
\theoremstyle{plain}
\newtheorem{thm}{Theorem}
\newtheorem{lem}{Lemma}
\theoremstyle{definition}
\newtheorem{defn}{Definition}
\begin{document}
 \makeatletter
 \headsep 10 mm
 \footskip 20 mm
\renewcommand{\@evenhead}%
{\vbox{\hbox to\textwidth{\strut {\thepage \hfil  { On self-intersection local times ...}}}
\hrule}}
\renewcommand{\@oddhead}%
{\vbox{\hbox to\textwidth{\strut {{  A.A.Dorogovtsev, O.L.Izyumtseva} \hfil\thepage}} \hrule}} \makeatother

\begin{center}
{\Large\bf On self-intersection local times for generalized Brownian bridges and the distance between step functions}\\[1cm]
A.A.Dorogovtsev and O.L.Izyumtseva
\end{center}
\vskip25pt

\noindent
{\bf Subjclass}: 60G15, 60H40, 60J55, 46C05.
\vskip15pt
\noindent
{\bf Keywords}: Gaussian integrators, white noise, self-intersection local time, Fourier-Wiener transform.
\vskip25pt
\centerline{\bf{Abstract}}
\vskip15pt
In the paper $k$-multiple self-intersection local time for planar Gaussian integrators generated by linear operator with nontrivial kernel is studied. In this case additional singularities arise in its formal Fourier--Wiener transform. In case $k=2$ the set of singularities is the finite number of points. In case $k>2$ it contains intervals and hyperplanes. In the first and the second cases using two different approaches related on structure of set of singularities we show that ``new'' singularities do not imply on the convergence of integral corresponding to the formal Fourier--Wiener transform and regularization consist of compensation of impact of diagonals as for the Wiener process.

\vskip20pt
\begin{center}
{\bf 1. Introduction}
\end{center}
\vskip15pt
In present article we study a self-intersection local time (SILT) for planar Gaussian integrators
\begin{equation}
\label{eq1}
x(t)=((A\1_{[0;t]}, \xi_1), (A\1_{[0;t]}, \xi_2)),  \ t\in[0; 1].
\end{equation}
Here $A$ is a continuous linear operator in $L_2([0; 1]), \xi_1, \xi_2$ are two independent Gaussian white noises in the same space \cite{1, 2}. Gaussian integrators firstly appeared in works of A.A.Dorogovtsev \cite{3, 4} in connection with an anticipating stochastic integration. Note that if $A$ is an identity formula \eqref{eq1} defines a planar Wiener process. For $A=I-P,$ where $P$ is a projection onto $\1_{[0; 1]}$ the process $x$ is a planar Brownian bridge. One can check that planar fractional Brownian motion with Hurst parameter $\alpha>\frac{1}{2}$ has representation \eqref{eq1} with integral operator $A$ defined by kernel $K(t_1, t_2)=(t_2-t_1)^{2\alpha-2}\1_{\{t_2>t_1\}}$ (see \cite{5} for the proof). $k$-multiple SILT for the process $x$ is formally defined as
\begin{equation}
\label{eq2}
T^x_k=\int_{\Delta_k}\prod^{k-1}_{i=1}\delta_0(x(t_{i+1})-x(t_i))d\vec{t},
\end{equation}
where $\Delta_k=\{0\leq t_1\leq\ldots\leq t_k\leq1\},$ $\delta_0$ is a delta-function at the point zero. It is well known
[6--8] that \eqref{eq2} can not be defined as the limit of approximating family
$$
T^x_{\ve,k}=\int_{\Delta_k}\prod^{k-1}_{i=1}f_\ve(x(t_{i+1})-x(t_i))d\vec{t}
 $$
 with $f_\ve(z)=\frac{1}{2\pi\ve}e^{-\frac{\|z\|^2}{2\ve}},\ z\in\mbR^2,\  \ve>0$ even in the case of planar Wiener process $w.$ Different approaches to a renormalization of $T^w_k$ where described in [6--8]. The application of renormalized self-intersection local time for planar Wiener process is considered in \cite{9}. Most related to our work is Rosen renormalization \cite{7}. Consider it more precisely. J.Rosen introduced the following renormalization
 $$
R^w_{\ve,k}=\int_{\Delta_k}\prod^{k-1}_{i=1}f_\ve(w(t_{i+1})-w(t_i))d\vec{t},
$$
where $\{\eta\}=\eta-E\eta$ and proved the convergence in mean square of random variables $R_{\ve, k}$ as $\ve\to0.$

The renormalization of SILT for planar Gaussian processes which do not have Markov property is systematically studied in works of the authors [10--13]. The white noise tools allow to ignore Wiener properties and reduce the consideration of finite dimensional distributions of small increments to the studying of geometry of Gram determinants constructed by increments of Hilbert valued function generating the process and projections on its linear spans. For the description of functionals from $\xi_1, \xi_2$ we use the Fourier--Wiener transform. It is known that any square integrable random variable $\alpha$ which is measurable with respect to white noises $\xi_1,\xi_2$ uniquely defined by its Fourier--Wiener transform \cite{14}. In the paper we use the following definition.
\begin{defn}
\label{defn1}
$$
\cT(\alpha)=E\alpha e^{(h_1,\xi_1)+(h_2, \xi_2)-\frac{1}{2}(\|h_1\|^2+\|h_2\|^2)},\ h_1,h_2\in L_2([0; 1])
$$
is said to be the Fourier--Wiener transform of random variable $\alpha$.
\end{defn}
By calculating
$$
ET^x_{\ve, k}e^{(h_1,\xi_1)+(h_2, \xi_2)-\frac{1}{2}(\|h_1\|^2+\|h_2\|^2)}
$$
and formally passing to the limit as $\ve\to0$ one can get the formal Fourier--Wiener transform of random variable $T^x_k$ which is described by expression
\begin{equation}
\label{eq3}
\int_{\Delta_k}\frac{1}
{(2\pi)^{k-1}G(\Delta g(t_1), \ldots, \Delta g(t_{k-1}))}
e^{-\frac{1}{2}(\|P_{t_1\ldots t_k}h_1\|^2+P_{t_1\ldots t_k}h_2\|^2)}d\vec{t},
\end{equation}
where $G(\Delta g(t_1), \ldots, \Delta g(t_{k-1}))$ is the Gram determinant constructed from increments of function $g(t)=A\1_{[0; t]}$ and $P_{t_1\ldots t_k}$ is a projection onto linear span generated by \break $\Delta g(t_1), \ldots, \Delta g(t_{k-1}).$ Further in the paper for the linear span generated by elements $q_1,\ldots,q_m$ of $L_2([0;1])$ we use notation $LS\{q_1, \ldots,q_m\}$. Note that \eqref{eq3} is divergent integral since the denominator blow up on the diagonals of $\Delta_k.$ The renormalization for $T^x_k$ is equivalent to the regularization of divergent integral \eqref{eq3}. Such regularization for \eqref{eq3} was introduced by authors in \cite{10} for $A=I+S,$ where $I, S$ are identity and compact operators in $L_2([0; 1]),\ \|S\|<1.$ Condition $\|S\|<1$ implies a continuous invertibility of operator $I+S.$ In this case the Gram determinant in \eqref{eq3} turns to zero only on diagonals of $\Delta_k.$ In \cite{10} the following regularization was proposed. Let $\Delta\wt{g}(t_1), \ldots, \Delta\wt{g}(t_{k-1})$ be an orthonormal system which is obtained from $\Delta{g}(t_1), \ldots, \Delta{g}(t_{k-1})$ via the orthogonalization procedure. For $M\subset\{1, \ldots, k-1\}$ denote by $P_M$ the projection onto $LS\{\Delta\wt{g}(t_i),\ i\in M\}.$
\begin{thm}[\cite{10}]
\label{thm1}
For any $h\in L_2([0; 1])$ the following integral
\begin{equation}
\label{eq4}
\int_{\Delta_k}\frac{1}
{G(\Delta g(t_1), \ldots, \Delta g(t_{k-1}))}
\sum_{M\subset\{1, \ldots, k-1\}}(-1)^{|M|}
e^{-\frac{1}{2}\|P_Mh\|^2}d\vec{t}
\end{equation}
converges.
\end{thm}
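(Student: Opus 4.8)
The plan is to exploit the orthonormality of the system $\Delta\wt g(t_1),\ldots,\Delta\wt g(t_{k-1})$ to collapse the alternating sum in \eqref{eq4} into a product. Writing $b_i=(h,\Delta\wt g(t_i))$, orthonormality gives $\|P_Mh\|^2=\sum_{i\in M}b_i^2$, so
\begin{equation*}
\sum_{M\subset\{1,\ldots,k-1\}}(-1)^{|M|}e^{-\frac12\|P_Mh\|^2}
=\sum_{M}\prod_{i\in M}\bigl(-e^{-b_i^2/2}\bigr)
=\prod_{i=1}^{k-1}\bigl(1-e^{-b_i^2/2}\bigr).
\end{equation*}
Each factor lies in $[0,1)$ and $G>0$ off the diagonals, so the integrand in \eqref{eq4} is nonnegative almost everywhere on $\Delta_k$; hence proving convergence is the same as bounding the integral of a nonnegative function, and no cancellation between regions has to be tracked.

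Next I would factor the reciprocal Gram determinant through the Gram--Schmidt lengths. Let $P_{<i}$ be the projection onto $LS\{\Delta g(t_1),\ldots,\Delta g(t_{i-1})\}$ and $d_i=\|(I-P_{<i})\Delta g(t_i)\|$, so that $G=\prod_{i=1}^{k-1}d_i^2$ and $\Delta\wt g(t_i)=d_i^{-1}(I-P_{<i})\Delta g(t_i)$. Combining this with the elementary inequality $1-e^{-x}\le x$ ($x\ge0$) yields the pointwise bound
\begin{equation*}
\frac{1}{G}\prod_{i=1}^{k-1}\bigl(1-e^{-b_i^2/2}\bigr)\le\prod_{i=1}^{k-1}\frac{b_i^2}{2d_i^2},
\end{equation*}
so that it remains to control each ratio $b_i^2/d_i^2$.

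The denominators are handled by the invertibility of $A=I+S$. Since $\Delta g(t_i)=A\1_{[t_i;t_{i+1}]}$ and $A$ is bounded below by $\sigma:=\|A^{-1}\|^{-1}>0$, one has $d_i\ge\sigma\,\mathrm{dist}\bigl(\1_{[t_i;t_{i+1}]},LS\{\1_{[t_j;t_{j+1}]}:j<i\}\bigr)$; the indicators sit on disjoint intervals, hence are orthogonal, and the last distance equals $\|\1_{[t_i;t_{i+1}]}\|=\sqrt{t_{i+1}-t_i}$, giving $d_i^2\ge\sigma^2(t_{i+1}-t_i)$. For the numerators, using $P_{<i}=P_{<i}^{*}$ and passing the adjoint across $A$, one writes $b_i=d_i^{-1}\int_{t_i}^{t_{i+1}}\bigl(A^{*}(I-P_{<i})h\bigr)(s)\,ds$, and Cauchy--Schwarz on the interval gives $b_i^2\le d_i^{-2}(t_{i+1}-t_i)\int_{t_i}^{t_{i+1}}|A^{*}(I-P_{<i})h|^2$. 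Combining the two estimates reduces the integrand, up to the constant $(2\sigma^4)^{-(k-1)}$, to the Wiener-type expression $\prod_{i=1}^{k-1}(t_{i+1}-t_i)^{-1}\int_{t_i}^{t_{i+1}}|A^{*}(I-P_{<i})h|^2\,ds$, i.e.\ a product of interval averages.

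The final and hardest step is the convergence of this reduced integral over $\Delta_k$. I would first interchange the orders of integration by Fubini: each factor $(t_{i+1}-t_i)^{-1}\int_{t_i}^{t_{i+1}}$ then turns into a logarithmic weight, which is locally integrable, and one is left with an iterated integral of $|A^{*}h|^2$ against products of such weights. The obstacle is twofold. First, the moving projections $P_{<i}$ degenerate as the points $t_1,\ldots,t_i$ coalesce, so replacing $A^{*}(I-P_{<i})h$ by the fixed function $A^{*}h$ uniformly near the diagonals requires estimating $\int_{t_i}^{t_{i+1}}|A^{*}P_{<i}h|^2$, and this is exactly where the compactness of $S$ (beyond mere invertibility of $A$) must be used. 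Second, the resulting integral is only logarithmically (borderline) convergent for a general $h\in L_2$: the bound $1-e^{-x}\le x$ is too wasteful where the increments $b_i$ are large, and one must supplement it by the saturation bound $1-e^{-x}\le1$, splitting $\Delta_k$ according to the size of each $b_i$. I expect this interpolation, together with the logarithmic Fubini estimate, to be the technical heart of the argument and to secure finiteness of \eqref{eq4} for every $h\in L_2([0;1])$.
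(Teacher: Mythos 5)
First, note that this paper does not actually prove Theorem \ref{thm1}: it is imported verbatim from \cite{10}, so there is no in-paper proof to compare against, and your attempt has to be judged on its own. Your opening reduction is correct and is the natural starting point: since $\Delta\wt g(t_1),\ldots,\Delta\wt g(t_{k-1})$ are orthonormal, $\|P_Mh\|^2=\sum_{i\in M}b_i^2$ with $b_i=(h,\Delta\wt g(t_i))$, the alternating sum collapses to $\prod_{i=1}^{k-1}(1-e^{-b_i^2/2})\ge 0$, $G=\prod_i d_i^2$ via Gram--Schmidt, the lower bound $d_i^2\ge\sigma^2(t_{i+1}-t_i)$ follows from invertibility of $A$ plus orthogonality of the indicators, and the identity $b_i=d_i^{-1}\int_{t_i}^{t_{i+1}}\bigl(A^{*}(I-P_{<i})h\bigr)(s)\,ds$ with Cauchy--Schwarz is also right. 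This correctly explains why the Rosen-type compensation of the diagonals is the right regularization.

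The gap is the final step, which you defer with ``I expect\ldots'' rather than prove, and it is not a routine technicality: it is the entire content of the theorem. After your reduction you must integrate $\prod_{i}(t_{i+1}-t_i)^{-1}\int_{t_i}^{t_{i+1}}|A^{*}(I-P_{<i})h|^2\,ds$ over $\Delta_k$, and the Fubini/Beta-function argument you gesture at only works if the integrand in the inner localized integral is a \emph{fixed} $L_2$ function; here it depends on $t_1,\ldots,t_i$ through the moving projection $P_{<i}$. The obvious repairs fail: bounding $\int_{t_i}^{t_{i+1}}|A^{*}P_{<i}h|^2$ by $\|A\|^2\|h\|^2$ destroys the localization and leaves the non-integrable $\prod_i(t_{i+1}-t_i)^{-1}$, while expanding $P_{<i}h$ in the $\Delta\wt g(t_j)$ reintroduces the factors $d_j^{-1}$ and spreads the localization over the wrong intervals. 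So the decoupling you flag as ``obstacle one'' is precisely where an actual argument (the geometric analysis of projections onto spans of increments carried out in \cite{10}) is required and is missing. Incidentally, your ``obstacle two'' is not real: once the decoupling is achieved, the bound $(t_{i+1}-t_i)^{-1}\le\bigl((s_i-t_i)(t_{i+1}-s_i)\bigr)^{-1/2}$ makes the $\vec t$-integral uniformly bounded (a product of Beta integrals), so the estimate $1-e^{-x}\le x$ is not borderline and no interpolation with $1-e^{-x}\le 1$ is needed. As it stands, the proposal is a correct and well-motivated reduction followed by an unproved claim at the decisive point.
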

One can see that regularization \eqref{eq4} for \eqref{eq3} coincides with Rosen renormalization for $T^x_k$ in case $A=I.$ The aim of present paper is to consider the general case when the operator which generates the integrator $x$ has a nontrivial kernel. In this case additional singularities arise in integral \eqref{eq3}.
We consider the case when $\dim \Ker A<+\infty.$ Such condition leads to the concretization of additional singularities in \eqref{eq3}. Namely, for $k=2$ one can check that the Gram determinant in the denominator can have the new zeros only at the finite number of points. We specify the asymptotics of the denominator in these points and check the convergence of the integral. For $k>2$ the set of singularities of $\cT(T^x_k)$ has a complicated structure. It contains intervals and hyperplanes. That is why described approach can not be extended on cases $k>2.$ For $k>2$ the method we use rely on studying of functional properties of function $G(\Delta g(t_1), \ldots, \Delta g(t_{k-1})),\ t_1, \ldots, t_k\in\Delta_k,$ where as it was mentioned above $G$ is the Gram determinant constructed from increments of function $g(t)=A\1_{[0; t]}.$ Condition $G(\Delta g(t_1), \ldots, \Delta g(t_{k-1}))=0$ as it will be discussed further implies that step functions belong to $\Ker A.$ Proving the positivity of distances between orthogonal complement of subspace of step functions in $\Ker A$  and subspaces generated by step functions and indicators $\1_{[t_i; t_{i+1}]},\  i=\overline{1, k-1}$ we obtain lower estimates for $G(\Delta g(t_1), \ldots, \Delta g(t_{k-1}))$ which allow to check the convergence of integral corresponding to the Fourier--Wiener transform of $k$-multiple SILT on the domain off the diagonals for planar Gaussian integrator.
\vskip15pt
\begin{center}
{ \bf 2. Double self-intersection local time of generalized Brownian bridges}
\end{center}
\label{section2}
\vskip15pt
In this section we study double SILT for process \eqref{eq1} with $A=I+S,$ where $I,\ S$ are identity and compact operators in the space $L_2([0;1])$. The formal Fourier--Wiener transform \eqref{eq3} for that case has the following representation
\begin{equation}
\label{eq5}
\cT(T^x_2)(h_1, h_2)=
\int_{\Delta_2}\frac{1}
{2\pi\|(I+S)\1_{[t_1;t_2]}\|^2}
e^{-\frac{1}{2}(\|P_{t_1t_2}h_1\|^2+\|P_{t_1t_2}h_2\|^2)}d\vec{t}.
\end{equation}
As it was mentioned in Introduction in case $\Ker I+S\ne \{0\}$ the denominator in integral \eqref{eq5} can have zeros outside of diagonal. We check that ``old'' regularization \cite{10} for $\cT(T^x_2)$  remains valid and new singularities do not influence on the integrability of function $\frac{1}{\|(I+S)\1_{[t_1;t_2]}\|^2},\ t_1,t_2\in\Delta_2$ on any subset of $\Delta_2$ off the diagonal. Let L be a finite-dimensional subspace of $L_2([0; 1])$. Denote by $E_L=\{\1_{[t_1; t_2]}\in L,\ t_1, t_2\in\Delta_2, \ t_1<t_2\}.$
\begin{lem}
\label{lem1}
Set $E_L$ is finite.
\end{lem}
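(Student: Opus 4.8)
The plan is to argue by contradiction, producing from an infinite $E_L$ a sequence of unit vectors in $L$ that converges weakly to zero — impossible in a finite-dimensional space. So suppose $E_L$ is infinite. Then there are \emph{distinct} pairs $(a_i,b_i)$, $i\in\mbN$, with $0\le a_i<b_i\le1$ and $\1_{[a_i;b_i]}\in L$. Since $\{(a,b):0\le a\le b\le1\}$ is compact, after passing to a subsequence I may assume $(a_i,b_i)\to(a^*,b^*)$ with $0\le a^*\le b^*\le1$. Because $\|\1_{[0;s]}-\1_{[0;t]}\|^2=|s-t|$, the map $t\mapsto\1_{[0;t]}$ is continuous into $L_2([0;1])$, so $f_i:=\1_{[a_i;b_i]}=\1_{[0;b_i]}-\1_{[0;a_i]}\to\1_{[0;b^*]}-\1_{[0;a^*]}=:f^*$ in $L_2([0;1])$; as $L$ is finite-dimensional, hence closed, we get $f^*\in L$.

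Next I would set $d_i:=f_i-f^*\in L$. Distinctness of the pairs forces $d_i\ne0$ for all large $i$, while $\|d_i\|\to0$. The decisive structural point is that each $d_i$ is a difference of two indicators, so $|d_i|\le1$ everywhere, and its support satisfies $\{d_i\ne0\}\subseteq[a_i;b_i]\triangle[a^*;b^*]$, a set $A_i$ whose measure is at most $|a_i-a^*|+|b_i-b^*|\to0$. Normalizing, put $\hat d_i:=d_i/\|d_i\|$, a sequence of unit vectors living in the finite-dimensional space $L$.

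Finally I would derive the contradiction. By compactness of the unit sphere of $L$ a subsequence of $\hat d_i$ converges \emph{strongly} (finite dimension) to some $d^*$ with $\|d^*\|=1$. On the other hand $\hat d_i\rightharpoonup0$ weakly: for any $\psi\in L_2([0;1])$, since $d_i$ is supported on $A_i$, Cauchy--Schwarz gives $|\langle\hat d_i,\psi\rangle|=\|d_i\|^{-1}|\langle d_i,\psi\1_{A_i}\rangle|\le\|\psi\1_{A_i}\|$, and the right-hand side tends to $0$ by absolute continuity of the integral because $|A_i|\to0$. Since in a finite-dimensional space the weak and strong limits coincide, $d^*=0$, contradicting $\|d^*\|=1$. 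The main obstacle — and the entire content of the lemma — is exactly this concentration phenomenon: the normalized increments spread over supports of vanishing measure and hence cannot converge to a nonzero vector, which is only a contradiction because $L$ is finite-dimensional. I expect the sole routine care to be the endpoint bookkeeping, where the degenerate case $a^*=b^*$ (so $f^*=0$ and $d_i=\1_{[a_i;b_i]}$ shrinks to a point) and the case $a^*<b^*$ (where the two bumps of $d_i$ localize near $a^*$ and $b^*$) are both absorbed into the single support estimate $|A_i|\le|a_i-a^*|+|b_i-b^*|$.
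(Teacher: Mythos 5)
Your proof is correct, but it takes a genuinely different route from the paper's. The paper argues combinatorially: it selects a maximal linearly independent subset $\1_{[t^1_1;t^1_2]},\ldots,\1_{[t^l_1;t^l_2]}$ of $E_L$ (so $l\le\dim L$), notes that every element of $E_L$ lies in its linear span, and observes that a linear combination of interval indicators is a step function whose jump points lie among the $2l$ endpoints — hence the endpoints $u,v$ of any $\1_{[u;v]}\in E_L$ belong to a fixed finite set, which bounds $|E_L|$ explicitly in terms of $\dim L$. You instead run a compactness argument: an infinite $E_L$ would produce an accumulation point $(a^*,b^*)$, the normalized differences $\hat d_i$ are unit vectors of $L$ concentrating on sets of measure $|a_i-a^*|+|b_i-b^*|\to0$ and therefore converge weakly to $0$, which is incompatible with norm-compactness of the unit sphere of the finite-dimensional $L$. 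All your steps check out (the only cosmetic point is that distinctness of the pairs gives $d_i\ne0$ for all but at most \emph{one} index rather than ``all large $i$'', which you can fix by discarding that index). The trade-off: the paper's argument is elementary and yields a quantitative bound on $|E_L|$; yours is soft and gives only finiteness, but it is exactly the concentration/weak-convergence mechanism the paper itself develops in Lemma~2 for the asymptotics near points of $E_L$, so your route unifies the two lemmas and would extend to any family of functions with uniformly bounded values and supports shrinking in measure, not just indicators of intervals.
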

\begin{proof}
Suppose that $\dim L=m.$ Then set $E_L$ contains at most $m$ linearly independent elements. Denote by $\1_{[t^1_1; t^1_2]}, \ldots, \1_{[t^l_1; t^l_2]}$ the maximal linearly independent subset of it. Let $\1_{[u; v]}\in LS\{\1_{[t^1_1; t^1_2]}, \ldots, \1_{[t^l_1; t^l_2]}\}.$ Then one can check that $u, v\in\{t^1_1, t^1_2, \ldots, t^l_1, t^l_2\}.$ This completes the proof of the lemma.
\end{proof}
The following two statements related to the behaviour of $\|\1_{[t_1; t_2]}\|$ in small neighborhoods of elements from $E_L.$
\begin{lem}
\label{lem2}
Let $\1_{[t^0_1; t^0_2]}$ be a fixed element of $E_L$. Then the family of functions
$$
\left\{
\frac{\1_{[t_1;t_2]}-\1_{[t^0_1; t^0_2]}}
{\|
\1_{[t_1;t_2]}-\1_{[t^0_1; t^0_2]}
\|},\ t_1, t_2\in\Delta_2
\right\}
$$
weakly converges to zero in $L_2([0; 1])$ as $t_1\to t^0_1,\ t_2\to t^0_2.$
\end{lem}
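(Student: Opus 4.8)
The plan is to verify the definition of weak convergence directly: for an arbitrary $\vf\in L_2([0;1])$ I want to show that the scalar product of the normalized difference with $\vf$ tends to zero. First I would record the elementary form of the denominator. Since $t^0_1<t^0_2$, for $(t_1,t_2)$ close enough to $(t^0_1,t^0_2)$ the two indicators differ only on a small neighbourhood of $t^0_1$ and a small neighbourhood of $t^0_2$, which are disjoint, and on each of these pieces the difference equals $\pm1$; hence
$$
\|\1_{[t_1;t_2]}-\1_{[t^0_1;t^0_2]}\|^2=|t_1-t^0_1|+|t_2-t^0_2|.
$$

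Next, writing $\Phi(t)=\int_0^t\vf(s)\,ds$, the numerator pairs with $\vf$ as
$$
(\1_{[t_1;t_2]}-\1_{[t^0_1;t^0_2]},\vf)=\bigl(\Phi(t_2)-\Phi(t^0_2)\bigr)-\bigl(\Phi(t_1)-\Phi(t^0_1)\bigr).
$$
The key step is to estimate each increment of $\Phi$ by the Cauchy--Schwarz inequality over the short interval $I_i$ between $t^0_i$ and $t_i$:
$$
|\Phi(t_i)-\Phi(t^0_i)|=\Bigl|\int_{I_i}\vf\Bigr|\le\Bigl(\int_{I_i}|\vf|^2\Bigr)^{1/2}|t_i-t^0_i|^{1/2}.
$$
The point I want to emphasise is that the naive version, in which $\int_{I_i}|\vf|^2$ is replaced by $\|\vf\|^2$, only yields boundedness of the ratio, not convergence to zero; the improvement comes from observing that $I_i$ shrinks to a point as $t_i\to t^0_i$, so by absolute continuity of the Lebesgue integral $\int_{I_i}|\vf|^2\to0$. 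Setting $\ve_i=(\int_{I_i}|\vf|^2)^{1/2}\to0$, this gives $|\Phi(t_i)-\Phi(t^0_i)|\le\ve_i|t_i-t^0_i|^{1/2}$.

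Finally I would combine the two estimates. With $\ve=\max(\ve_1,\ve_2)$ and the elementary inequality $a^{1/2}+b^{1/2}\le\sqrt2\,(a+b)^{1/2}$ one obtains
$$
\frac{|(\1_{[t_1;t_2]}-\1_{[t^0_1;t^0_2]},\vf)|}{\|\1_{[t_1;t_2]}-\1_{[t^0_1;t^0_2]}\|}\le\frac{\ve_1|t_1-t^0_1|^{1/2}+\ve_2|t_2-t^0_2|^{1/2}}{(|t_1-t^0_1|+|t_2-t^0_2|)^{1/2}}\le\sqrt2\,\ve\to0.
$$
Since $\vf$ was arbitrary, this proves the asserted weak convergence. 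I expect the only genuine obstacle to be the sharpening described above: one must resist discarding the localisation of the energy of $\vf$, because the normalized family has norm identically one and therefore cannot converge strongly, so the entire content of the lemma is carried by the fact that $\int_{I_i}|\vf|^2\to0$.
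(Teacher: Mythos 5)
Your proof is correct and is essentially the paper's argument in more explicit form: the paper decomposes $\1_{[t_1;t_2]}-\1_{[t^0_1;t^0_2]}$ into the two short, eventually disjoint indicators $e_i(\vec t)=\1_{[t_i\wedge t^0_i;\,t_i\vee t^0_i]}$ and bounds the projection via $\|P_{e_i}h\|^2=\bigl(\int_{I_i}h\bigr)^2/|I_i|\le\int_{I_i}h^2\to0$, which is precisely your Cauchy--Schwarz estimate combined with absolute continuity of the Lebesgue integral. Your closing remark correctly identifies the crux (the family cannot converge strongly, so the localisation of $\int|\vf|^2$ is what carries the proof), and your explicit computation of the denominator as $|t_1-t^0_1|+|t_2-t^0_2|$ plays the same role as the paper's orthogonality identity $P_{e_1e_2}=P_{e_1}+P_{e_2}$.
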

\begin{proof}
Let us check that for any $h\in L_2([0; 1])$
$$
\left(h,
\frac{\1_{[t_1;t_2]}-\1_{[t^0_1; t^0_2]}}
{\|
\1_{[t_1;t_2]}-\1_{[t^0_1; t^0_2]}
\|},\ t_1, t_2\in\Delta_2
\right)\to0
$$
as $t_1\to t^0_1,\ t_2\to t^0_2.$ Denote by
$$
e(\vec{t})=\1_{[t_1; t_2]}-\1_{[t^0_1; t^0_2]},
$$
$$
e_i(\vec{t})=\1_{[t_i\wedge t^0_i; t_i\vee t^0_i]},\ i=1,2.
$$
Since
\begin{equation}
\label{eq6}
\|P_{e(\vec{t})}h\|^2=\frac{(h, e(\vec{t}))^2}
{\|e(\vec{t})\|^2},
\end{equation}
then to prove the statement it suffices to check that for any $h\in L_2([0; 1])$
$$
\|P_{e(\vec{t})}h\|^2\to0, \ t_1\to t^0_1,\ t_2\to t^0_2.
$$
Note that $e(\vec{t})\in LS\{e_1(\vec{t}), e_2(\vec{t})\}.$ Consequently,
$$
\|P_{e(\vec{t})}h\|^2\leq\|P_{e_1(\vec{t})e_2(\vec{t})}h\|^2.
$$
Let $O_\delta(t)=(t-\delta; t+\delta).$ Then there exists $\delta>0$ such that for any $t_1\in O_\delta(t^0_1),\  t_2\in O_\delta(t^0_2)$ $e_1(\vec{t})$ and $e_2(\vec{t})$ are orthogonal. It implies that
$$
P_{e_1(\vec{t})e_2(\vec{t})}=\sum^2_{i=1}P_{e_i(\vec{t})}.
$$
Note that for any $h\in L_2([0; 1]),\  i=1,2$
\begin{equation}
\label{eq7}
\|P_{e_i(\vec{t})}h\|^2\to0,  \ t_i\to t^0_i.
\end{equation}
Really,
\begin{equation}
\label{eq8}
\|P_{e_i(\vec{t})}h\|^2=
\frac{(e_i(\vec{t}),h)^2}{\|e_i(\vec{t})\|^2}=
\frac
{\Big(\int^{t_i\vee t^1_i}_{t_i\wedge t^1_i}h(s)ds\Big)^2}
{(t_i\vee t^1_i-t_i\wedge t^1_i)}.
\end{equation}
The Cauchy inequality imply that \eqref{eq8} is less or equal to $\int^{t_i\vee t^1_i}_{t_i\wedge t^1_i}h^2(s)ds.$
By continuity of Lebesgue integral the last expression tends to zero when $t_i\to t^0_i.$
\end{proof}
\begin{lem}
\label{lem3}
Let $S$ be a compact operator in $L_2([0; 1])$ and $\1_{[t^0_1;t^0_2]}\in\Ker I+S.$ Then
$$
\frac{\|(I+S)\1_{[t_1;t_2]}\|^2}
{\|\1_{[t_1;t_2]}-\1_{[t^0_1; t^0_2]}\|^2}\to1,\ t_1\to t^0_1,\ t_2\to t^0_2.
$$
\end{lem}
\begin{proof}
Note that
$$
\|(I+S)\1_{[t_1;t_2]}\|^2=\|(I+S)(\1_{[t_1;t_2]}-\1_{[t^0_1; t^0_2]})\|^2.
$$
It implies that
$$
\frac{\|(I+S)\1_{[t_1;t_2]}\|^2}
{\|\1_{[t_1;t_2]}-\1_{[t^0_1; t^0_2]}\|^2}=
$$
$$
=1-2
\frac{(\1_{[t_1;t_2]}-\1_{[t^0_1; t^0_2]}, S(\1_{[t_1;t_2]}-\1_{[t^1_1; t^1_2]}))}
{\|\1_{[t_1;t_2]}-\1_{[t^0_1; t^0_2]}\|^2}+
$$
$$
+
\frac{\| S(\1_{[t_1;t_2]}-\1_{[t^0_1; t^0_2]})\|^2}
{\|\1_{[t_1;t_2]}-\1_{[t^0_1; t^0_2]}\|^2}.
$$
The compactness of operator $S$ and Lemma \ref{lem2} end the proof.
\end{proof}
The following statement expands ``old'' regularization \cite{10} for $\cT(T^x_2)$ on planar Gaussian integrator generated by noninvertible operator $I+S,$ where $I, S$ are identity and compact operators in $L_2([0; 1]).$
\begin{thm}
\label{thm2}
For any $h\in L_2([0; 1])$ the following integral
$$
\int_{\Delta_2}
\frac{1}
{\|(I+S)\1_{[t_1;t_2]}\|}
[e^{-\frac{1}{2}\|P_{t_1t_2}h\|^2}-1]d\vec{t}
$$
converges.
\end{thm}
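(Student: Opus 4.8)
The plan is to localize the analysis to the two kinds of singularities of the integrand and estimate each separately. Write $L=\Ker(I+S)$; since $S$ is compact, $L$ is finite-dimensional, so by Lemma \ref{lem1} the set $E_L$ of indicators $\1_{[t^0_1;t^0_2]}\in L$ is finite. These indicators are non-degenerate, hence the corresponding points $(t^0_1,t^0_2)$ lie strictly off the diagonal. I would therefore split $\Delta_2$ into (i) a thin strip $D_\ve=\{0\le t_1\le t_2\le 1,\ t_2-t_1<\ve\}$ around the diagonal, (ii) finitely many small disjoint balls $B_j$ centred at the new singular points, and (iii) the remaining set, which is compact after removing the open strip and open balls. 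On (iii) the denominator $\|(I+S)\1_{[t_1;t_2]}\|$ is continuous and non-vanishing, hence bounded below, while the bracket is bounded, so this contribution is finite; the real work is on (i) and (ii).

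For the diagonal strip (i) — which I expect to be the main obstacle — the first step is the uniform asymptotics
$$
\frac{\|(I+S)\1_{[t_1;t_2]}\|^2}{t_2-t_1}\longrightarrow 1\quad\text{as }t_2-t_1\to 0,
$$
uniformly in the position of the interval. This holds because $\1_{[t_1;t_2]}/\sqrt{t_2-t_1}$ has unit norm and, by absolute continuity of the Lebesgue integral, tends weakly to zero uniformly in $t_1$; compactness of $S$ then forces $\|S\1_{[t_1;t_2]}\|^2/(t_2-t_1)\to 0$ uniformly, which one sees by contradiction along a sequence, extracting a weakly null subsequence and using that $S$ maps it to a norm-null one. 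Consequently, shrinking $\ve$, one gets $\|(I+S)\1_{[t_1;t_2]}\|^2\ge\frac12(t_2-t_1)$ on $D_\ve$.

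The second step on (i) exploits the compensating bracket. Using $|e^{-\frac12\|P_{t_1t_2}h\|^2}-1|\le\frac12\|P_{t_1t_2}h\|^2$ together with $\|P_{t_1t_2}h\|^2=(h,(I+S)\1_{[t_1;t_2]})^2/\|(I+S)\1_{[t_1;t_2]}\|^2$ and the Cauchy inequality
$$
(h,(I+S)\1_{[t_1;t_2]})^2=\Big(\int_{t_1}^{t_2}((I+S)^*h)(s)\,ds\Big)^2\le(t_2-t_1)\int_{t_1}^{t_2}((I+S)^*h)^2(s)\,ds,
$$
one bounds the integrand on $D_\ve$ by $C(t_2-t_1)^{-1/2}\int_{t_1}^{t_2}((I+S)^*h)^2$. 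Writing $t_1=u,\ t_2=u+\rho$ and integrating in $u$ first by Fubini, $\int_0^1\1_{\{u\le s\le u+\rho\}}\,du\le\rho$, so the strip integral is dominated by $C\|(I+S)^*h\|^2\int_0^\ve\rho^{-1/2}\rho\,d\rho<\infty$.

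Finally, for each ball $B_j$ around a new singular point $(t^0_1,t^0_2)$, Lemma \ref{lem3} gives $\|(I+S)\1_{[t_1;t_2]}\|^2\sim\|\1_{[t_1;t_2]}-\1_{[t^0_1;t^0_2]}\|^2=|t_1-t^0_1|+|t_2-t^0_2|$, while the bracket is merely bounded, since $0\le\|P_{t_1t_2}h\|^2\le\|h\|^2$ shows $|e^{-\frac12\|P_{t_1t_2}h\|^2}-1|\le 1-e^{-\|h\|^2/2}$. Hence on $B_j$ the integrand is $\le C(|t_1-t^0_1|+|t_2-t^0_2|)^{-1/2}$, and passing to the variables $a=t_1-t^0_1,\ b=t_2-t^0_2$ this is an integrable planar singularity, since $\int\!\!\int_{|a|+|b|\le r_0}(|a|+|b|)^{-1/2}\,da\,db=4\int_0^{r_0}s^{1/2}\,ds<\infty$. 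Summing the finitely many balls, the strip, and the bounded remainder yields convergence of the integral.
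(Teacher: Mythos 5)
Your proof is correct, and its skeleton for the ``new'' singularities is exactly the paper's: Lemma \ref{lem1} makes the set of off-diagonal zeros of the denominator finite, Lemma \ref{lem3} gives the lower bound $\|(I+S)\1_{[t_1;t_2]}\|^2\geq(1-\ve)\|\1_{[t_1;t_2]}-\1_{[t^0_1;t^0_2]}\|^2=(1-\ve)(|t_1-t^0_1|+|t_2-t^0_2|)$ near each such point, the bracket is bounded there, and the resulting planar singularity is integrable (the paper computes the four quadrant integrals of $1/(a+b)$ explicitly; your change of variables does the same job). The genuine difference is the diagonal: the paper disposes of $\Delta_2$ minus the neighborhoods of the new zeros by citing the regularization result of \cite{10}, whereas you prove that part from scratch via the uniform asymptotics $\|(I+S)\1_{[t_1;t_2]}\|^2/(t_2-t_1)\to1$ (a uniform, at-the-diagonal analogue of Lemma \ref{lem3}, again resting on compactness of $S$) combined with the compensation $|e^{-\frac12\|P_{t_1t_2}h\|^2}-1|\leq\frac12(h,(I+S)\1_{[t_1;t_2]})^2/\|(I+S)\1_{[t_1;t_2]}\|^2$, Cauchy--Schwarz and Fubini; this buys a self-contained argument and also incidentally justifies why the citation of \cite{10} (stated there for invertible $I+S$) still applies away from the new zeros. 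One bookkeeping remark: the theorem as printed has $\|(I+S)\1_{[t_1;t_2]}\|$ to the first power, which you followed literally --- in that case $(t_2-t_1)^{-1/2}$ is integrable over the strip even without the bracket --- while formula \eqref{eq5} and the paper's own proof use the square, for which the compensating bracket is essential near the diagonal; your Cauchy--Schwarz bound adapts, giving $(t_2-t_1)^{-1}\int_{t_1}^{t_2}((I+S)^*h)^2\,ds$ and a finite Fubini integral, so both readings are covered. It would also be worth stating explicitly that the off-diagonal zeros of the denominator are precisely the points of $E_{\Ker I+S}$, so that on your compact remainder the continuous denominator is indeed bounded below.
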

\begin{proof}
It follows from Lemma \ref{lem1} that set $E_{\Ker I+S}$ is finite. Suppose that
$E_{\Ker I+S}=\{\1_{[t^1_1; t^1_2]}, \ldots, \1_{[t^m_1;t^m_2]}\}.$ One can conclude from Lemma \ref{lem3} that for any $i=\ov{1, m},\ \ve>0$ there exists $\delta_i$ such that for any $t_1\in O_{\delta_i}(t^i_1),\  t_2\in O_{\delta_i}(t^i_2)$
\begin{equation}
\label{eq9}
\|(I+S)\1_{[t_1;t_2]}\|^2\geq(1-\ve)\|\1_{[t_1;t_2]}-\1_{[t^i_1; t^i_2]}\|^2.
\end{equation}
Put $\delta=\delta_1\wedge\ldots\wedge\delta_m.$ Consider nonintersecting domains
$K_i=\ov{O}_\delta(t^i_1)\times\ov{O}_\delta(t^i_2)\bigcap\Delta_2,\ \ov{1, m}.$ It follows from \cite{10} that for any $h\in L_2([0;1])$
$$
\int_{\Delta_2\setminus\cup^m_{i=1}K_i}
\frac{1}
{\|(I+S)\1_{[t_1;t_2]}\|^2}
[e^{-\frac{1}{2}\|P_{t_1t_2}h\|^2}-1]d\vec{t}
$$
converges. Therefore to end the proof it suffices to check that
$$
\int^{t^1_2+\delta}_{t^1_2-\delta}\int^{t^1_1+\delta}_{t^1_1-\delta}
\frac{1}
{\|(I+S)\1_{[t_1;t_2]}\|^2}dt_1dt_2
$$
converges. Relation \eqref{eq9} implies that
$$
\int^{t^1_2+\delta}_{t^1_2-\delta}\int^{t^1_1+\delta}_{t^1_1-\delta}
\frac{1}
{\|(I+S)\1_{[t_1;t_2]}\|^2}dt_1dt_2\leq
$$
$$
\leq
\frac{1}{1-\ve}
\int^{t^1_2+\delta}_{t^1_2-\delta}\int^{t^1_1+\delta}_{t^1_1-\delta}
\frac{1}
{\|\1_{[t_1;t_2]}-\1_{[t^1_1; t^1_2]}\|^2}dt_1dt_2=
$$
$$
=
\int^{t^1_2}_{t^1_2-\delta}\int^{t^1_1}_{t^1_1-\delta}
\frac{dt_1dt_2}{t^1_1-t_2-t_1+t^1_2}+
$$
$$
+\int^{t^1_2+\delta}_{t^1_2}\int^{t^1_1}_{t^1_1-\delta}
\frac{dt_1dt_2}{t_2-t_1-t^1_2+t^1_1}+
$$
$$
+
\int^{t_2}_{t^1_2-\delta}\int^{t^1_1-\delta}_{t^1_1}
\frac{dt_1dt_2}{t^1_2-t^1_1-t_2+t_1}+
$$
$$
+
\int^{t^1_2+\delta}_{t^1_2}\int^{t^1_1+\delta}_{t^1_1}
\frac{dt_1dt_2}{t_2+t_1-t^1_2-t^1_1},
$$
where each summand is finite.
\end{proof}
\vskip15pt
\begin{center}
{\bf 3. $k$-multiple self-intersection local time of generalized Brownian bridges}
\end{center}
\label{section3}
\vskip15pt
The main object of investigation in this section is $k$-multiple SILT for planar Gaussian integrators generated by continuous linear operator $A$ in $L_2([0; 1])$ which satisfies conditions

1) $\dim\Ker A<+\infty$

2) The restriction of operator $A$ on orthogonal complement of $\Ker A$ is continuously invertible operator.

 Let us notice that such class of Gaussian processes contains planar Gaussian integrators generated by $I+S,$ where $S$ is a compact operator in $L_2([0; 1]).$ As it was mentioned in Introduction the set of zeros of function
 $$
 G(\Delta g(t_1), \ldots, \Delta g(t_{k-1})),\ t_1, \ldots, t_k\in\Delta_k,\ k>2
 $$
has a complicated structure. Here as above $G(\Delta g(t_1), \ldots, \Delta g(t_{k-1}))$
is a Gram determinant constructed by $\Delta g(t_1), \ldots, \Delta g(t_{k-1}), \ g(t)=A\1_{[0; t]}.$ It contains intervals and hy\-per\-pla\-nes. The approach we use in case $k>2$ rely on studying geometric properties of function $G(\Delta g(t_1), \ldots, \Delta g(t_{k-1})),\ t_1, \ldots, t_k\in\Delta_k.$ We will see that condition
$$
G(\Delta g(t_1), \ldots, \Delta g(t_{k-1}))=0
$$
generates certain sets of step functions. Analyzing the distances between those sets we obtain lower estimates for $G(\Delta g(t_1), \ldots, \Delta g(t_{k-1}))$ which allow to establish that function
$$
 \frac{1}
{G(\Delta g(t_1), \ldots, \Delta g(t_{k-1}))},\ t_1, \ldots, t_k\in\Delta_k
$$
is integrable on any $\Delta^\delta_k,$ where $\Delta^\delta_k=\{\vec{t}\in\Delta_k, t_{i+1}-t_i\geq\delta\},\  \delta>0.$
It inspires to make a conclusion that ``old'' renormalization \cite{10} remains true for $\cT(T^x_k)$ in case $k>2$ too. We did not check that in this section but it will be the object of our further considerations. The main statement of present section is the following theorem.
\begin{thm}
\label{thm3}
Suppose that continuous linear operator $A$ in $L_2([0; 1])$ satisfies conditions 1), 2). Then for any $\delta>0$
$$
\int_{\Delta^\delta_k}
\frac{1}
{G(\Delta g(t_1), \ldots, \Delta g(t_{k-1}))}d\vec{t}<+\infty.
$$
\end{thm}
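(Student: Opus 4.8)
The plan is to reduce the statement, by means of condition 2), to a purely geometric bound on orthogonal projections of indicators, and then to isolate the degeneracies of the resulting Gram determinant that are caused by step functions lying in $\Ker A$. Write $u_i=\1_{[t_i;t_{i+1}]}$, so that $\Delta g(t_i)=Au_i$, and put $V=LS\{u_1,\ldots,u_{k-1}\}$. Let $P$ be the orthogonal projection of $L_2([0;1])$ onto $(\Ker A)^\perp$ and let $c>0$ satisfy $\|Aw\|\ge c\|w\|$ for $w\in(\Ker A)^\perp$, which exists by condition 2). Since $Au_i=A(Pu_i)$ and, for any linear map $B$, $G(Bw_1,\ldots,Bw_m)=G(w_1,\ldots,w_m)\prod_j\sigma_j^2$ with $\sigma_j$ the singular numbers of $B$ on $LS\{w_1,\ldots,w_m\}$, I obtain $G(\Delta g(t_1),\ldots,\Delta g(t_{k-1}))\ge\min(1,c)^{2(k-1)}G(Pu_1,\ldots,Pu_{k-1})$. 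Fixing an orthonormal basis $n_1,\ldots,n_n$ of the finite-dimensional space $\Ker A$ (this is where condition 1) enters) and subtracting from each $u_i$ its projection onto $\Ker A$ does not change the Gram determinant of the whole system, whence the clean identity $G(Pu_1,\ldots,Pu_{k-1})=G(u_1,\ldots,u_{k-1},n_1,\ldots,n_n)$. It therefore suffices to bound this last determinant from below.

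Next I would locate its zeros. Let $\mathcal S\subseteq\Ker A$ be the set of step functions lying in $\Ker A$; it is a subspace of the finite-dimensional $\Ker A$, hence finite-dimensional, and choosing a basis of it exhibits a finite set $D$ containing all discontinuities of all its elements. Decompose $\Ker A=\mathcal S\oplus\mathcal R$ orthogonally, with orthonormal bases $s_1,\ldots,s_q$ of $\mathcal S$ and $r_1,\ldots,r_{\dim\mathcal R}$ of $\mathcal R$. The determinant $G(u_1,\ldots,u_{k-1},n_1,\ldots,n_n)$ vanishes exactly when a nontrivial combination of the $u_i$ lies in $\Ker A$, i.e. when $V\cap\mathcal S\ne\{0\}$; thus all degeneracies are governed by $\mathcal S$. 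The component $\mathcal R$ is removed by the following uniform separation estimate, which is the core of the argument: there is $c_0>0$ with $\operatorname{dist}(r,V+\mathcal S)\ge c_0\|r\|$ for all $r\in\mathcal R$ and all $\vec t\in\Delta^\delta_k$. Granting this, compressing the projection onto $(V+\mathcal S)^\perp$ to $\mathcal R$ shows that the product of the squared sines of the principal angles between $V+\mathcal S$ and $\mathcal R$ is at least $c_0^{2\dim\mathcal R}$, so that $G(u_1,\ldots,u_{k-1},n_1,\ldots,n_n)\ge c_0^{2\dim\mathcal R}\,G(u_1,\ldots,u_{k-1},s_1,\ldots,s_q)$, eliminating $\mathcal R$.

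The separation estimate is where I expect the main obstacle; it is exactly the positivity of the distance between the orthogonal complement of the step functions in $\Ker A$ and the spans generated by step functions and indicators announced in the Introduction. I would argue by contradiction and compactness: if it failed there would be $\vec t^{(m)}\to\vec t^*$ in the compact closure of $\Delta^\delta_k$ and unit vectors $r_m\to r^*$ in $\mathcal R$, together with $v_m\in V^{(m)}$ and $\sigma_m\in\mathcal S$, such that $r_m-v_m-\sigma_m\to0$; projecting onto $\mathcal S^\perp$ reduces this to $(I-Q_{\mathcal S})v_m\to r^*$, where $Q_{\mathcal S}$ is the projection onto $\mathcal S$. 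If the coefficients of $v_m$ stay bounded one passes to a limit $v^*\in V^*\cap\Ker A\subseteq\mathcal S$, forcing $r^*=0$, a contradiction. The delicate case is $\|v_m\|\to\infty$: here I would use that $v_m-\sigma_m$ is a step function with at most $(k-1)+|D|$ breakpoints, that the set of step functions with a bounded number of breakpoints is closed in $L_2([0;1])$ (on any subinterval eventually free of breakpoints, $L_2$-convergence forces constancy of the limit), and that $v_m-\sigma_m\to r^*$; hence $r^*$ would be a step function in $\Ker A$, i.e. $r^*\in\mathcal S\cap\mathcal R=\{0\}$, again a contradiction. The gaps $t_{i+1}-t_i\ge\delta$ are used here to keep $V$ of full dimension $k-1$ with $P_V$ varying continuously in $\vec t$, and to bound the number of breakpoints.

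It remains to prove $\int_{\Delta^\delta_k}G(u_1,\ldots,u_{k-1},s_1,\ldots,s_q)^{-1}d\vec t<+\infty$. Peeling off the orthogonal system $u_1,\ldots,u_{k-1}$ gives $G(u_1,\ldots,u_{k-1},s_1,\ldots,s_q)=\big(\prod_i(t_{i+1}-t_i)\big)G(P_{V^\perp}s_1,\ldots,P_{V^\perp}s_q)\ge\delta^{k-1}G(P_{V^\perp}s_1,\ldots,P_{V^\perp}s_q)$, so the task becomes integrability of the reciprocal squared volume of the projections of the \emph{fixed} functions $s_1,\ldots,s_q$ onto $V^\perp$. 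Off a neighborhood of its zero set this quantity is bounded below by compactness; its zero set is a finite union of configurations in which a nonzero $\sigma\in\mathcal S$ becomes constant on each $[t_i;t_{i+1}]$ and vanishes off $[t_1;t_k]$, and each such configuration pins at least two partition coordinates to points of $D$ (a nonzero step function has at least two support boundaries), hence has codimension at least two. Localizing near each configuration and bounding $\operatorname{dist}(\sigma,V)^2$ below by a constant times the sum of the distances $|t_i-d|$ of the critical coordinates to the corresponding points of $D$ — the approximation error being concentrated on intervals of those lengths, exactly as $\|\1_{[t_1;t_2]}-\1_{[t^0_1;t^0_2]}\|^2=|t_1-t^0_1|+|t_2-t^0_2|$ in Theorem \ref{thm2} — one obtains a singularity of type $1/(|t_i-d|+|t_j-d'|)$ in the two critical variables, integrable precisely as in the four integrals ending the proof of Theorem \ref{thm2}, the remaining variables contributing a bounded factor. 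Summing over the finitely many configurations gives the claim.
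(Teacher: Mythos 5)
Your opening moves coincide with the paper's: the reduction via conditions 1), 2) and the Gram-determinant identity to $G(\1_{[t_1;t_2]},\ldots,\1_{[t_{k-1};t_k]},n_1,\ldots,n_n)$ (Lemmas \ref{lem4}, \ref{lem5} there), the identification of its zeros with configurations where a nontrivial combination of the indicators is a step function in $\Ker A$, and the elimination of the complement $\mathcal R$ of the step-function subspace $\mathcal S\subset\Ker A$ by a uniform separation estimate proved through compactness plus the closedness of step functions with boundedly many jumps (Lemmas \ref{lem6}, \ref{lem7}, including your bounded/unbounded dichotomy). The divergence is in the endgame, and that is where your argument does not close. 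The paper never analyzes the zero set of $G(P_{V^\perp}s_1,\ldots,P_{V^\perp}s_q)$ directly: it replaces the basis of $\mathcal S$ by the indicators $\1_{[s_1;s_2]},\ldots,\1_{[s_{N-1};s_N]}$ over the consecutive jump points (Lemma \ref{lem8}, an induction on distances using $f_i\in LS\{\1_{[s_j;s_{j+1}]}\}$) and then proves $\int_{\Delta^\delta_k}G(\1_{[t_1;t_2]},\ldots,\1_{[s_{N-1};s_N]})^{-1}d\vec t<+\infty$ by writing $1/G$ as a joint density of a planar Wiener process at the times $t_1,\ldots,t_k,s_1,\ldots,s_N$ and integrating out one variable at a time with Gaussian-kernel estimates (Lemma \ref{lem9}). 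That device handles all coincidences $t_i\to s_j$ at once, however many occur.

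Your direct geometric treatment of the singularity has two genuine gaps. First, the step from $\mathrm{dist}(\sigma,V)^2\ge c(|t_i-d|+|t_j-d'|)$ to a lower bound on the full determinant $G(P_{V^\perp}s_1,\ldots,P_{V^\perp}s_q)$ requires that the remaining $q-1$ projected directions stay uniformly transverse to the degenerating one near the critical configuration; this is asserted implicitly and not proved. Second, and more seriously, you treat each critical configuration as if exactly one direction of $\mathcal S$ degenerates there. If $\dim(\mathcal S\cap V^{*})=p\ge2$ (for instance $\1_{[d_1;d_2]},\1_{[d_2;d_3]}\in\Ker A$ with $d_1<d_2<d_3$ all pinned to partition coordinates), the Gram determinant is comparable to a \emph{product} of $p$ factors of the type $|t_i-d|+|t_j-d'|$, so a single such factor is an upper bound rather than a lower bound, and your two-variable integrability computation no longer applies. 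Integrability can likely be rescued — $p$ linearly independent step functions in $V^{*}$ pin at least $p+1$ distinct coordinates, and the resulting product of reciprocals is still integrable — but this combinatorial count, together with the factorization of the degenerate Gram determinant into such factors, is precisely the content that is missing from your sketch; the paper's probabilistic Lemma \ref{lem9} is what replaces it. (The phrase ``a nonzero step function has at least two support boundaries'' also needs care: $\1_{[0;1]}$ has none, and the correct statement is that its membership in $V$ pins $t_1=0$ and $t_k=1$.)
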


To prove the statement we need the following lemma.
\begin{lem}
\label{lem4}
Let $\dim L<+\infty,\  e_1, \ldots, e_m$ be an orthonormal basis in $L$ and $P$ be a projection on $L.$ Then for any $g_1, \ldots, g_k\in L_2([0; 1])$ the following relation holds
$$
G((I-P)g_1, \ldots, (I-P)g_k)=G(g_1, \ldots, g_k, e_1, \ldots, e_m).
$$
\end{lem}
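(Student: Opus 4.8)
The plan is to read the Gram determinant as a function of the Gram matrix, $G(v_1,\dots,v_n)=\det\big((v_i,v_j)\big)_{i,j=1}^n$, and to exploit that $e_1,\dots,e_m$ is an \emph{orthonormal} basis of $L$, so that on the right-hand side the block of inner products among the $e_l$ is the identity matrix. The whole argument then reduces to two elementary facts: the way a Gram matrix transforms under a linear change of the generating system, and the orthogonality relations produced by $I-P$. Throughout I use that $P$ is the orthogonal projection onto $L$, so that $Pg=\sum_{l=1}^m (g,e_l)e_l$ and hence $(I-P)g\in L^{\perp}$ for every $g$.

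First I would rewrite $(I-P)g_i=g_i-\sum_{l=1}^m (g_i,e_l)e_l$ and observe that this exhibits the system $(I-P)g_1,\dots,(I-P)g_k,e_1,\dots,e_m$ as the image of the system $g_1,\dots,g_k,e_1,\dots,e_m$ under a linear map $T$ of the coordinate space $\mbR^{k+m}$ that subtracts a combination of the last $m$ vectors from each of the first $k$ and leaves the last $m$ vectors fixed. The matrix of $T$ is triangular with unit diagonal, so $\det T=1$. Writing $w_a$ for the new generators and $v_i$ for the old ones, the new Gram matrix is
$$
\big((w_a,w_b)\big)_{a,b}=T^{\top}\big((v_i,v_j)\big)_{i,j}\,T,\qquad \det T=1,
$$
so the two Gram determinants coincide:
$$
G(g_1,\dots,g_k,e_1,\dots,e_m)=G\big((I-P)g_1,\dots,(I-P)g_k,e_1,\dots,e_m\big).
$$

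Next I would note that each $(I-P)g_i$ lies in $L^{\perp}$ and is therefore orthogonal to every $e_l$, while $(e_l,e_{l'})=\delta_{ll'}$. Consequently the Gram matrix of $(I-P)g_1,\dots,(I-P)g_k,e_1,\dots,e_m$ is block-diagonal, with the $k\times k$ Gram matrix of $(I-P)g_1,\dots,(I-P)g_k$ in one block and the identity $I_m$ in the other. Its determinant is the product of the two, namely $G\big((I-P)g_1,\dots,(I-P)g_k\big)$, and combining this with the previous identity yields the claim.

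I do not expect a genuine obstacle; the statement is essentially a bookkeeping identity, and the only points needing care are the correct transformation law for Gram determinants (so that the change-of-generators matrix really has determinant $1$) and the fact that $I-P$ is the orthogonal projection onto $L^{\perp}$, which gives $(I-P)g_i\perp e_l$. An equivalent route avoiding the change of variables is to write the right-hand Gram matrix directly in block form $\left(\begin{smallmatrix}A&B\\ B^{\top}&I_m\end{smallmatrix}\right)$ with $A_{ij}=(g_i,g_j)$ and $B_{il}=(g_i,e_l)$, apply the Schur complement formula to obtain $\det(A-BB^{\top})$, and then verify, using the self-adjointness and idempotence of $P$, that $A-BB^{\top}$ equals the Gram matrix $\big((I-P)g_i,(I-P)g_j\big)$.
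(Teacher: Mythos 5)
Your proposal is correct and follows essentially the same route as the paper: the paper also first observes that adjoining $e_1,\dots,e_m$ to $(I-P)g_1,\dots,(I-P)g_k$ costs nothing (block-diagonal Gram matrix with an identity block), and then reduces $G(g_1,\dots,g_k,e_1,\dots,e_m)$ to the same block form by explicit column and row operations, which are exactly your unimodular change of generators $T^{\top}MT$ with $\det T=1$ written out entrywise. Your packaging (and the Schur-complement variant) is just a more compact statement of the identical computation.
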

\begin{proof}
Note that
$$
G((I-P)g_1, \ldots, (I-P)g_k)=G((I-P)g_1, \ldots,(I-P)g_k, e_1, \ldots, e_m).
$$
Put
$$
c_{ij}:=((I-P)g_i, (I-P)g_j)=(g_i,g_j)-\sum^m_{k=1}(e_k, g_i)(e_k,g_j),
$$
then
$$
G((I-P)g_1, \ldots, (I-P)g_k)=G((I-P)g_1, \ldots,(I-P)g_k, e_1, \ldots, e_m)=
$$
\begin{equation}
\label{eq10}
=\begin{vmatrix}
c_{ij}&\vdots&0\\
\hdotsfor{3}\\
0&\vdots&I
\end{vmatrix},\ i,j=\ov{1, k}.
\end{equation}
On other hand
\begin{equation}
\label{eq11}
G(g_1, \ldots, g_k, e_1, \ldots, e_m)=
\begin{vmatrix}
(g_1,g_1)&\ldots&(g_1,g_k)&(g_1,e_1)&\ldots&(g_1,e_m)\\
\vdots&&\vdots&\vdots&&\vdots\\
(g_k,g_1)&\ldots&(g_k,g_k)&(g_k,e_1)&\ldots&(g_k,e_m)\\
(e_1,g_1)&\ldots&(e_1,g_k)&1&\ldots&0\\
\vdots&&\vdots&&\ddots&\\
(e_m,g_1)&\ldots&(e_m,g_k)&0&\ldots&1
\end{vmatrix}.
\end{equation}
Multiplying $(k+1)$-th column by $(g_1, e_1)$, \ldots, $(k+m)$-th column by $(g_1, e_m)$ and subtracting from 1-th column we get that \eqref{eq11} equals
\begin{equation}
\label{eq12}
\begin{vmatrix}
c_{11}&\ldots&(g_1,g_k)&(g_1,e_1)&\ldots&(g_1,e_m)\\
\vdots&&\vdots&\vdots&&\vdots\\
c_{k1}&\ldots&(g_k,g_k)&(g_k,e_1)&\ldots&(g_k,e_m)\\
0&&(e_1,g_k)&1&\ldots&0\\
\vdots&&\vdots&\vdots&&\vdots\\
0&&(e_m,g_k)&0&\ldots&1
\end{vmatrix}
\end{equation}
Multiplying $(k+1)$-th column by $(g_2, e_1), \ldots, $ $(k+m)$-th column by $(g_2, e_m)$ and subtracting from 2-th column we get that \eqref{eq12} equals
\begin{equation}
\label{eq13}
\begin{vmatrix}
c_{11}&c_{12}&\ldots&(g_1,g_k)&(g_1,e_1)&\ldots&(g_1,e_m)\\
\vdots&\vdots&&\vdots&\vdots&&\vdots\\
c_{k1}&c_{k2}&\ldots&(g_k,g_k)&(g_k,e_1)&\ldots&(g_k,e_m)\\
0&0&&(e_1,g_k)&1&\ldots&0\\
\vdots&\vdots&&\vdots&\vdots&&\vdots\\
0&0&&(e_m,g_k)&0&\ldots&1
\end{vmatrix}
\end{equation}
and so on. Finally we get that \eqref{eq11} equals
\begin{equation}
\label{eq14}
\begin{vmatrix}
&&&\vdots&(g_1,e_1)&\ldots&(g_1, e_m)\\
&c_{ij}&&\vdots&\vdots&&\vdots\\
\hdotsfor{7}\\
&0&&\vdots&&I&
\end{vmatrix}.
\end{equation}
Multiplying $(k+1)$-th row by $(g_1, e_1), \ldots, $ $(k+m)$-th row by $(g_1, e_m)$ and subtracting from 1-th row, \ldots,
$(k+1)$-th row by $(g_k, e_k), \ldots, $ $(k+m)$-th row by $(g_k, e_m)$ and subtracting from $k$-th row we get that \eqref{eq14} equals
$$
\begin{vmatrix}
c_{ij}&\vdots&0\\
\hdotsfor{3}\\
0&\vdots&I
\end{vmatrix}, i,j=\ov{1, k}
$$
which proves the lemma.
\end{proof}
\begin{proof}[Proof of Theorem \ref{thm3}] Note that if $P$ is a projection onto $\Ker A,$ then
$$
G(A\1_{[t_1;t_2]}, \ldots, A\1_{[t_{k-1};t_k]} )=
$$
\begin{equation}
\label{eq15}
=G(A(I-P)\1_{[t_1;t_2]}, \ldots, A(I-P)\1_{[t_{k-1};t_k]}).
\end{equation}
Further we need the following statement which was proved in \cite{12}.
\end{proof}
\begin{lem}\cite{12}
\label{lem5}
Suppose that $B$ is a continuously invertible operator in the Hilbert space $H.$ Then for all $k\geq1$ there exists a positive constant $c(k)$ which depends on $k$ and $B$ such that for any $q_1, \ldots, q_k\in H$ the following relation holds
$$
G(Bq_1, \ldots, Bq_k)\geq c(k)G(q_1, \ldots, q_k).
$$
It follows from condition 2) of the theorem and Lemma \ref{lem5} that \eqref{eq15} greater or equal to
\begin{equation}
\label{eq16}
c(k)
G((I-P)\1_{[t_1;t_2]}, \ldots, (I-P)\1_{[t_{k-1};t_k]}).
\end{equation}
\end{lem}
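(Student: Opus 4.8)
The plan is to reduce the stated inequality to a uniform lower bound on the determinant of the compression of the positive operator $B^*B$ to the linear span of $q_1,\ldots,q_k$. First I would dispose of the degenerate case: if $q_1,\ldots,q_k$ are linearly dependent, then, $B$ being invertible, $Bq_1,\ldots,Bq_k$ are linearly dependent as well, so both Gram determinants vanish and the inequality holds trivially for any $c(k)$. Thus I may assume $q_1,\ldots,q_k$ linearly independent, so that $G(q_1,\ldots,q_k)>0$.

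The key step is a multiplicative decomposition. Put $V=LS\{q_1,\ldots,q_k\}$, fix an orthonormal basis $f_1,\ldots,f_k$ of $V$, and write $q_j=\sum_i a_{ij}f_i$ with $A=(a_{ij})$. Then $G(q_1,\ldots,q_k)=|\det A|^2$, while from $(Bq_i,Bq_j)=(B^*Bq_i,q_j)$ and the coordinate identity $(B^*Bq_i,q_j)=(A^*MA)_{ij}$, where $M=((Bf_l,Bf_m))_{l,m}$, one obtains
$$
G(Bq_1,\ldots,Bq_k)=|\det A|^2\det M=G(q_1,\ldots,q_k)\,G(Bf_1,\ldots,Bf_k).
$$
This cleanly separates the configuration-dependent factor $G(q_1,\ldots,q_k)$ from the operator-dependent factor $\det M=G(Bf_1,\ldots,Bf_k)$, and reduces the lemma to bounding the latter below by a constant depending only on $k$ and $B$.

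To control $\det M$, set $C=B^*B$, a positive self-adjoint operator. Continuous invertibility of $B$ yields $\|Bx\|\ge m\|x\|$ with $m=\|B^{-1}\|^{-1}>0$, hence $(Cx,x)=\|Bx\|^2\ge m^2\|x\|^2$, that is $C\ge m^2 I$. Now $M=((Cf_l,f_m))_{l,m}$ is precisely the matrix, in the orthonormal basis $\{f_i\}$, of the compression $P_VC|_V$ of $C$ to $V$; this compression is self-adjoint, and for every $v\in V$ one has $(P_VCv,v)=(Cv,v)\ge m^2(v,v)$, so $P_VC|_V\ge m^2 I_V$. Consequently all $k$ eigenvalues of $M$ are at least $m^2$, whence $\det M\ge m^{2k}$. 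Combining this with the decomposition gives the assertion with the explicit constant $c(k)=m^{2k}=\|B^{-1}\|^{-2k}$.

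I do not expect a genuine obstacle in this argument, which is elementary linear algebra carried out in the finite-dimensional space $V$. The two points requiring care are the verification of the coordinate identity $(B^*Bq_i,q_j)=(A^*MA)_{ij}$ underlying the multiplicative decomposition, and the recognition that $M$ is a compression of $C=B^*B$ and therefore inherits the uniform bound $C\ge m^2 I$; everything else is routine. If one prefers to avoid the explicit orthonormalization of $V$, the same conclusion follows geometrically by noting that $G(Bq_1,\ldots,Bq_k)/G(q_1,\ldots,q_k)$ is the squared volume-scaling factor of the injective map $B|_V$, equal to $\det(P_VC|_V)$, each of whose singular values is at least $m$.
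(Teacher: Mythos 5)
Your proof is correct. Note first that the paper itself gives no proof of this lemma --- it is quoted from reference [12] --- so there is no in-paper argument to compare against; your write-up is a valid self-contained substitute. The degenerate case is handled properly (linear dependence kills the right-hand side), the coordinate identity is sound: with $q_j=\sum_i a_{ij}f_i$ the Gram matrix of $Bq_1,\ldots,Bq_k$ is $A^*MA$ with $M=((Bf_l,Bf_m))_{l,m}$, giving the clean factorization $G(Bq_1,\ldots,Bq_k)=G(q_1,\ldots,q_k)\det M$; and the compression bound is right, since for $v=\sum_l c_lf_l\in V$ one has $c^*Mc=\|Bv\|^2\ge\|B^{-1}\|^{-2}\|c\|^2$, so every eigenvalue of the positive matrix $M$ is at least $\|B^{-1}\|^{-2}$ and $\det M\ge\|B^{-1}\|^{-2k}$. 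A natural alternative, more in the spirit of the machinery this paper actually deploys (see the proof of Lemma \ref{lem8}), is induction on $k$ via the recursion $G(q_1,\ldots,q_k)=d_1^2\,G(q_2,\ldots,q_k)$, where $d_1$ is the distance from $q_1$ to $LS\{q_2,\ldots,q_k\}$: since $\|B(q_1-\sum c_iq_i)\|\ge\|B^{-1}\|^{-1}\|q_1-\sum c_iq_i\|$, each distance scales by at least $\|B^{-1}\|^{-1}$ and the same constant $c(k)=\|B^{-1}\|^{-2k}$ drops out. Your multiplicative decomposition buys a one-shot argument with the explicit constant visible at a glance; the inductive route avoids choosing a basis. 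One further point worth making explicit: your argument never uses surjectivity of $B$, only the lower bound $\|Bx\|\ge m\|x\|$, and this weaker hypothesis is exactly what is available in the application following the lemma, where $B$ is the restriction of $A$ to the orthogonal complement of $\Ker A$ (condition 2)), applied to the vectors $(I-P)\1_{[t_i;t_{i+1}]}$, which lie in that complement --- so your proof in fact covers the intended use slightly more directly than the statement as phrased.
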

Lemma \ref{lem5} implies that \eqref{eq16} equals
$G(\1_{[t_1;t_2]}, \ldots, \1_{[t_{k-1};t_k]}, e_1, \ldots, e_n)$ for any orthonormal basis $\{e_k,\ k=\ov{1, n}\}$ in $\Ker A.$ Consequently, to prove the theorem it suffices to check that for any $\delta>0$
\begin{equation}
\label{eq17}
\int_{\Delta^\delta_k}
\frac{d\vec{t}}
{G(\1_{[t_1;t_2]}, \ldots, \1_{[t_{k-1};t_k]}, e_1, \ldots, e_n)}<+\infty.
\end{equation}
To check \eqref{eq17} one have to describe the set
$$
\{\vec{t}\in\Delta^\delta_k: G(\1_{[t_1;t_2]}, \ldots, \1_{[t_{k-1};t_k]}, e_1, \ldots, e_n)=0\}.
$$
Note that
$G(\1_{[t_1;t_2]}, \ldots, \1_{[t_{k-1};t_k]}, e_1, \ldots, e_n)=0$ iff there exist $\alpha_1, \ldots, \alpha_{k-1}$ such that $\alpha^2_1+ \ldots+ \alpha^2_{k-1}>0$ and $\beta_1, \ldots, \beta_n$ which satisfy relation
\begin{equation}
\label{eq18}
\sum^{k-1}_{i=1}\alpha_i\1_{[t_i;t_{i+1}]}=\sum^n_{j=1}\beta_je_j.
\end{equation}
Relation \eqref{eq18} implies that if $G(\1_{[t_1;t_2]}, \ldots, \1_{[t_{k-1};t_k]}, e_1, \ldots, e_n)=0,$ then step functions belong to $\Ker A.$ Denote by $L$ the subspace of all step functions in $\Ker A.$ Suppose that $\{f_k,\ k=\ov{1,s}\}$ is an orthonormal basis in $L.$ Let $e_1, \ldots, e_m$ be an orthonormal basis in the orthogonal complement of $L$ in $\Ker A.$ Note that $f_1, \ldots, f_s, e_1, \ldots, e_m$ is an orthonormal basis in $\Ker A$ and for any $\beta_1, \ldots, \beta_m$
$$
\sum^m_{j=1}\beta_je_j\perp L.
$$
Let us check that for any $\delta>0$
\begin{equation}
\label{eq19}
\int_{\Delta^\delta_k}
\frac{d\vec{t}}
{G(\1_{[t_1;t_2]}, \ldots, \1_{[t_{k-1};t_k]},f_1, \ldots, f_s, e_1, \ldots, e_m)}<+\infty.
\end{equation}
To prove \eqref{eq19} we need the following statements.
\begin{lem}
\label{lem6}
Let $M$ be the set of step functions with the amount of jumps less or equal to a fixed number. Then $M$ is a closed set in $L_2([0; 1]).$
\end{lem}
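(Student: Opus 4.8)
The plan is to realize $M$ as the image of a compact parameter set and then exploit uniqueness of $L_2$ limits. Fix the bound $N$ on the number of jumps. Every element of $M$ can be written as
$$
f=\sum_{i=0}^{N}c_i\1_{[s_i;s_{i+1})},
$$
where $0=s_0\leq s_1\leq\ldots\leq s_N\leq s_{N+1}=1$ and $c_0,\ldots,c_N\in\mbR$ (coinciding breakpoints, or equal neighbouring values, simply describe a function with strictly fewer than $N$ jumps, so this form covers all of $M$). The breakpoints $(s_1,\ldots,s_N)$ range over the closed simplex $\{0\leq s_1\leq\ldots\leq s_N\leq1\}$, which is compact in $[0;1]^N$; this compactness is the device that lets me pass to a limiting configuration of jumps.

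Now take $f_n\in M$ with $f_n\to f$ in $L_2([0;1])$, and let $s_1^{(n)}\leq\ldots\leq s_N^{(n)}$ together with $c_0^{(n)},\ldots,c_N^{(n)}$ be the corresponding breakpoints and values. By compactness of the simplex I would pass to a subsequence along which $s_i^{(n)}\to s_i^{*}$ for every $i$. Writing $0=u_0<u_1<\ldots<u_r=1$ for the distinct values among the limiting points $s_i^{*}$, one has $r\leq N+1$, and since the $u_j$ exhaust these limits, no $s_i^{*}$ lies strictly inside any $(u_{j-1};u_j)$.

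The heart of the argument is the behaviour on each surviving interval $(u_{j-1};u_j)$. On any compact $[a;b]\subset(u_{j-1};u_j)$, for all large $n$ no breakpoint $s_i^{(n)}$ lies in $[a;b]$, so $f_n$ is constant on $[a;b]$, equal to some value $\gamma_j^{(n)}$. Since $(f_n)$ is $L_2$-Cauchy and $\|f_n-f_m\|_{L_2([a;b])}^2=(b-a)(\gamma_j^{(n)}-\gamma_j^{(m)})^2$, the numbers $\gamma_j^{(n)}$ form a Cauchy sequence in $\mbR$ and converge to some $\gamma_j$; consequently $f=\gamma_j$ a.e.\ on $[a;b]$, and letting $[a;b]\uparrow(u_{j-1};u_j)$ gives $f=\gamma_j$ a.e.\ on $(u_{j-1};u_j)$. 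Thus $f$ agrees a.e.\ with a step function whose jumps are confined to $u_1,\ldots,u_{r-1}$, i.e.\ at most $r-1\leq N$ jumps, so $f\in M$. Because the $L_2$ limit is unique, the function produced along the subsequence is the original limit $f$, which closes the argument.

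I expect the main obstacle to be the possibility that the values $c_i^{(n)}$ blow up; this can genuinely happen on intervals that collapse to a single point, in the form of a tall thin spike of bounded $L_2$ mass. The point to emphasize is that such intervals sit on a Lebesgue-null set in the limit and therefore cannot contribute to $f$, so no control of those values is needed. This is exactly why the argument is organized around the \emph{surviving} intervals $(u_{j-1};u_j)$ only, where $L_2$ convergence rigidly pins down the limiting constants.
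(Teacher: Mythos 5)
Your proof is correct and follows essentially the same route as the paper's: pass to a subsequence along which the (at most $N$) jump points converge, observe that on compact subintervals strictly between consecutive limiting jump points the $f_n$ are eventually constant, deduce that the $L_2$ limit is constant there, and conclude that $f$ is (a.e.\ equal to) a step function with at most $N$ jumps. Your explicit handling of the Cauchy argument for the limiting constants and of the collapsing intervals is a slightly more detailed write-up of the same idea, not a different method.
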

\begin{proof}
Suppose that $M$ is a set of step functions with the number of jumps less or equal to $n.$ Let $\{f_k,\ k\geq1\}\in M$ and $f_k\to f,\ k\to\infty.$ Check that $f\in M.$ Assume that function $f_k$ has jumps in points
$0<t^k_1<\ldots<t^k_{m_k}<1,\ 0\leq m_k\leq n. $ If $m_k=0,$ then $f_k$ does not have jumps. By considering subsequence one can suppose that $m_k=m$ and $(t^k_1, \ldots, t^k_m)\to(t_1, \ldots, t_m),\ k\to\infty,$ where $t_0=0\leq t_1\leq\ldots\leq t_m\leq 1=t_{m+1}.$ Denote by $\pi_{a,b}$ a projection onto $L_2([a; b]).$ If $t_i<t_{i+1}$ for some $i=\ov{0,m}$, then for any $\alpha, \beta$ such that $t_i<\alpha<\beta<t_{i+1}$ the following convergence holds $\pi_{\alpha, \beta}f_k\to\pi_{\alpha,\beta}f,\ k\to\infty.$ Consequently, $f$ is a constant on any $[\alpha;\beta]\subset[t_i; t_{i+1}].$ It implies that $f$ is a constant on $[t_i,t_{i+1}].$ Using the same arguments for any $t_i<t_{i+1}$ we conclude that $f\in M.$
\end{proof}
\begin{lem}
\label{lem7}
There exists a positive constant $c$ such that the following relation holds
$$
G(\1_{[t_1;t_2]}, \ldots, \1_{[t_{k-1};t_k]},f_1, \ldots, f_s, e_1, \ldots, e_m)\geq
$$
$$
\geq
c\ G(\1_{[t_1;t_2]}, \ldots, \1_{[t_{k-1};t_k]},f_1, \ldots, f_s)
$$
\end{lem}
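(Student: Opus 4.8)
The plan is to reduce the inequality to a uniform positivity statement about the angle between the fixed finite-dimensional subspace $E:=LS\{e_1,\ldots,e_m\}$ and the varying space of step functions, and then to extract that positivity by a compactness argument built on Lemma \ref{lem6}. First I would invoke the standard factorization of Gram determinants: writing $V:=LS\{\1_{[t_1;t_2]},\ldots,\1_{[t_{k-1};t_k]},f_1,\ldots,f_s\}$ and letting $Q$ denote the orthogonal projection onto $V^{\perp}$, one has
$$
G(\1_{[t_1;t_2]},\ldots,\1_{[t_{k-1};t_k]},f_1,\ldots,f_s,e_1,\ldots,e_m)=G(\1_{[t_1;t_2]},\ldots,\1_{[t_{k-1};t_k]},f_1,\ldots,f_s)\cdot G(Qe_1,\ldots,Qe_m).
$$
Hence Lemma \ref{lem7} is equivalent to the claim that $c:=\inf_{\vec t\in\Delta_k}G(Qe_1,\ldots,Qe_m)>0$, the infimum being over all $\vec t$ while the $e_j$ stay fixed.

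Next I would translate $G(Qe_1,\ldots,Qe_m)$ into a distance. Since $e_1,\ldots,e_m$ are orthonormal, for $c\in\mbR^m$ and $u=\sum_i c_ie_i$ one has $\|u\|=\|c\|$ and $\sum_{i,j}c_ic_j(Qe_i,Qe_j)=\|Qu\|^2=d(u,V)^2$; consequently the smallest eigenvalue of the Gram matrix of $Qe_1,\ldots,Qe_m$ equals $\min\{d(u,V)^2:u\in E,\ \|u\|=1\}$. As this Gram matrix satisfies $0\preceq\Gamma\preceq I$ (because $Q$ is a contraction), its determinant $G(Qe_1,\ldots,Qe_m)$ is the product of eigenvalues lying in $[0,1]$, so $G(Qe_1,\ldots,Qe_m)\to0$ along a sequence $\vec t^{(n)}$ forces the existence of unit vectors $u^{(n)}\in E$ with $d(u^{(n)},V^{(n)})\to0$, where $V^{(n)}$ is $V$ assembled from $\vec t^{(n)}$.

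Then comes the contradiction. Suppose $c=0$ and pick $\vec t^{(n)}$ and unit $u^{(n)}\in E$ as above. Since $\dim\Ker A<+\infty$, the space $E$ is finite-dimensional, so along a subsequence $u^{(n)}\to u$ with $u\in E$ and $\|u\|=1$; by the Lipschitz property of the distance to a set, $d(u,V^{(n)})\to0$, hence the nearest points $v^{(n)}:=(I-Q)u\in V^{(n)}$ satisfy $v^{(n)}\to u$. Every $v^{(n)}$ is a combination of the indicators $\1_{[t_i^{(n)};t_{i+1}^{(n)}]}$ and of $f_1,\ldots,f_s$, hence a step function whose number of jumps is bounded by a constant $N$ independent of $n$ (at most $k$ from the indicators plus the fixed number carried by the $f_j$). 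By Lemma \ref{lem6} the set of such step functions is closed, so $u$ is again a step function. But $u\in E\subset\Ker A$, so $u$ is a step function in $\Ker A$, i.e. $u\in L$, while at the same time $u\in E$ is orthogonal to $L$. Thus $u\in L\cap L^{\perp}=\{0\}$, contradicting $\|u\|=1$; therefore $c>0$, which is the assertion.

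The \emph{main obstacle} is exactly this last compactness step, because the spaces $V^{(n)}$ vary without any control (intervals may shrink, merge, or collapse, and the coefficients expressing $v^{(n)}$ may blow up), so one cannot argue with the $V^{(n)}$ directly. The device that rescues the argument is that, whatever the $\vec t^{(n)}$ are, the approximants $v^{(n)}$ always remain inside the single closed set of step functions with at most $N$ jumps; this is precisely what lets the limit $u$ inherit the step-function property and collide with its orthogonality to $L$. Only two routine points need care: confirming that the jump bound $N$ is genuinely uniform in $n$, and verifying the passage from $G(Qe_1,\ldots,Qe_m)\to0$ to a degenerating unit vector in $E$.
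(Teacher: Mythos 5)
Your proof is correct, and it runs on the same engine as the paper's: the closedness of the set of step functions with a uniformly bounded number of jumps (Lemma \ref{lem6}), combined with the observation that a nonzero element of $LS\{e_1,\ldots,e_m\}$ cannot be a step function because it lies in $\Ker A$ while being orthogonal to $L$. The packaging, though, is genuinely different. The paper writes the ratio of the two Gram determinants as a telescoping product, each factor being the squared distance from a single $e_i$ to $LS\{\1_{[t_1;t_2]},\ldots,\1_{[t_{k-1};t_k]},f_1,\ldots,f_s,e_1,\ldots,e_{i-1}\}$, and bounds each such distance below uniformly in $\vec t$; the approximating sequence then carries the remaining $e_l$'s with coefficients $\gamma^n_l$ that are not a priori bounded, which forces the paper into a two-case analysis (bounded versus blowing-up $e$-part, with a renormalization in the second case that is only sketched). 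You peel off all of $e_1,\ldots,e_m$ at once via the block factorization $G(\ldots,e_1,\ldots,e_m)=G(\ldots)\cdot G(Qe_1,\ldots,Qe_m)$ and convert the problem into a uniform lower bound on $\min\{d(u,V):u\in E,\ \|u\|=1\}$ through the smallest eigenvalue of the projected Gram matrix (using $\lambda_{\min}^m\leq\det\Gamma\leq\lambda_{\min}$ for $0\preceq\Gamma\preceq I$); compactness of the unit sphere of the finite-dimensional $E$ then normalizes the degenerating direction from the outset and eliminates the case split entirely. The two points you flag as needing care both check out: the jump count of every $v^{(n)}$ is at most $k$ plus the fixed number of jump points of $f_1,\ldots,f_s$, uniformly in $n$ and in the coefficients, and the eigenvalue extraction is exactly as you describe. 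Nothing is missing; if anything your version is the tighter of the two.
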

\begin{proof}
Note that
$$
\frac
{G(\1_{[t_1;t_2]}, \ldots, \1_{[t_{k-1};t_k]},f_1, \ldots, f_s, e_1, \ldots, e_m)}
{G(\1_{[t_1;t_2]}, \ldots, \1_{[t_{k-1};t_k]},f_1, \ldots, f_s)}=
$$
$$
=
\frac
{G(\1_{[t_1;t_2]}, \ldots, \1_{[t_{k-1};t_k]},f_1, \ldots, f_s, e_1))}
{G(\1_{[t_1;t_2]}, \ldots, \1_{[t_{k-1};t_k]},f_1, \ldots, f_s)}\cdot
$$
$$
\cdot
\frac
{G(\1_{[t_1;t_2]}, \ldots, \1_{[t_{k-1};t_k]},f_1, \ldots, f_s, e_1,  e_2)}
{G(\1_{[t_1;t_2]}, \ldots, \1_{[t_{k-1};t_k]},f_1, \ldots, f_s, e_1)}\cdot
$$
$$
\cdot\ldots\cdot
\frac
{G(\1_{[t_1;t_2]}, \ldots, \1_{[t_{k-1};t_k]},f_1, \ldots, f_s, e_1, \ldots, e_m)}
{G(\1_{[t_1;t_2]}, \ldots, \1_{[t_{k-1};t_k]},f_1, \ldots, f_s, e_1, \ldots, e_{m-1})}.
$$
Denote  by
$$
\cK^i_t=LS\{\1_{[t_1;t_2]}, \ldots, \1_{[t_{k-1};t_k]},f_1, \ldots, f_s, e_1,\ldots,  e_i\},
$$
$$
\cK^i=\bigcup_{t\in\Delta_k}\cK^i_t.
$$
Let $r_i$ be a distance from $e_i$ to $\cK^i,\ i=\ov{1,m}.$ Then to prove the lemma it suffices to check that for any $i=\ov{1,m} $ \ $r_i>0.$ Suppose that this is not true. Then there exists $j=\ov{1,m}$ such that $r_j=0.$ Let $j=m.$ It implies that there exists the sequence
$$
\Big\{\sum^{k-1}_{i=1}\alpha^n_i\1_{[t^n_i; t^n_{i+1}]}+\sum^s_{j=1}\beta^n_jf_j+\sum^{m-1}_{l=1}\gamma^n_le_l,\  n\geq1\Big\}
$$
such that
$$
\Big\|
e_m-\sum^{k-1}_{i=1}\alpha^n_i\1_{[t^n_i; t^n_{i+1}]}-\sum^s_{j=1}\beta^n_jf_j-\sum^{m-1}_{l=1}\gamma^n_le_l
\Big\|\to0,\ n\to\infty.
$$
Therefore, the question is when
$$
\sum^{k-1}_{i=1}\alpha^n_i\1_{[t^n_i; t^n_{i+1}]}+\sum^s_{j=1}\beta^n_jf_j+\sum^{m-1}_{l=1}\gamma^n_le_l+e_m
$$
tends to zero as $n\to\infty$ ? Consider possible cases.

1) Suppose that
$$
\varlimsup_{n\to\infty}\left\|\sum^{m-1}_{l=1}\gamma^n_le_l+e_m\right\|<+\infty.
$$
Considering a subsequence assume that for $l=\ov{1, m-1}$ $\gamma^n_l\to\gamma_l,\ n\to\infty.$
It implies that
$$
\sum^{k-1}_{i=1}\alpha^n_i\1_{[t^n_i; t^n_{i+1}]}+\sum^s_{j=1}\beta^n_jf_j\to-\sum^{m-1}_{l=1}\gamma_le_l-e_m,\  n\to\infty.
$$
Note that $\sum^{m-1}_{l=1}\gamma_le_l+e_m$ is not a step function. On other hand
$$
\Big\{
\sum^{k-1}_{i=1}\alpha^n_i\1_{[t^n_i; t^n_{i+1}]}+\sum^s_{j=1}\beta^n_jf_j,\ n\geq1
\Big\}
$$
is a sequence of step functions with the number of jumps less or equal to a fixed number. It follows from Lemma \ref{lem6} that situation 1) is impossible.

2) Considering a subsequence one can suppose that
$$
a_n=\|\sum^{m-1}_{l=1}\gamma^n_le_l+e_m\|\to+\infty,\ n\to+\infty
$$
and
$$
\frac{1}{a_n}\Big(\sum^{m-1}_{l=1}\gamma^n_le_l+e_m\Big)\to\sum^{m-1}_{l=1}p_le_l, \ n\to\infty,
$$
where $\|\sum^{m-1}_{l=1}p_le_l\|=1.$ Using the same arguments as in case 1) one can get a contradiction.
\end{proof}
\begin{lem}
\label{lem8}
Let $0\leq s_1<\ldots<s_N\leq1$ be the points of jumps of functions $f_1, \ldots, f_s.$ Then there exists a positive constant $c_{\vec{s}}$ which depends on $\vec{s}=(s_1, \ldots, s_N)$ such that the following relation holds
$$
G(\1_{[t_1;t_2]}, \ldots, \1_{[t_{k-1};t_k]},f_1, \ldots, f_s)\geq
$$
\begin{equation}
\label{eq20}
\geq c_{\vec{s}}\ G(\1_{[t_1;t_2]}, \ldots, \1_{[t_{k-1};t_k]}, \1_{[s_1;s_2]}, \ldots, \1_{[s_{N-1};s_N]}).
\end{equation}
\end{lem}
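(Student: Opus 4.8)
The plan is to reduce both Gram determinants, via the orthogonal-complement factorization of Lemma \ref{lem4}, to Gram determinants built only from the increments $\1_{[t_i;t_{i+1}]}$ projected onto suitable orthogonal complements, and then to exploit that an orthogonal projection never increases a Gram determinant. Write $w_i=\1_{[t_i;t_{i+1}]}$ and $v_j=\1_{[s_j;s_{j+1}]}$, and set $L=LS\{f_1,\ldots,f_s\}$, $V=LS\{v_1,\ldots,v_{N-1}\}$. Since $s_1,\ldots,s_N$ are exactly the jump points of $f_1,\ldots,f_s$, each $f_j$ is constant on every interval $[s_i;s_{i+1}]$, hence $f_j\in V$ and $L\subseteq V$. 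Denote by $P_L,P_V$ the orthogonal projections onto $L,V$ and put $Q_L=I-P_L$, $Q_V=I-P_V$.

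First I would apply Lemma \ref{lem4}. Because $f_1,\ldots,f_s$ is an orthonormal basis of $L$, that lemma gives directly
$$
G(\1_{[t_1;t_2]}, \ldots, \1_{[t_{k-1};t_k]},f_1,\ldots,f_s)=G(Q_Lw_1,\ldots,Q_Lw_{k-1}).
$$
For the right-hand side of \eqref{eq20} I would first orthonormalize $v_1,\ldots,v_{N-1}$ into a basis $e_1,\ldots,e_{N-1}$ of $V$; the block change-of-basis determinant contributes the factor $G(v_1,\ldots,v_{N-1})$, so that, again by Lemma \ref{lem4},
$$
G(\1_{[t_1;t_2]}, \ldots, \1_{[t_{k-1};t_k]},v_1,\ldots,v_{N-1})=G(v_1,\ldots,v_{N-1})\,G(Q_Vw_1,\ldots,Q_Vw_{k-1}).
$$
Here $D_{\vec s}:=G(v_1,\ldots,v_{N-1})>0$ depends only on $\vec s$, since $v_1,\ldots,v_{N-1}$ are fixed and linearly independent. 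These are identities valid for arbitrary $w_i$, so the degenerate case in which $w_1,\ldots,w_{k-1}$ are linearly dependent needs no separate treatment: both sides of \eqref{eq20} then simply vanish.

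It remains to compare $G(Q_Lw_1,\ldots,Q_Lw_{k-1})$ with $G(Q_Vw_1,\ldots,Q_Vw_{k-1})$. From $L\subseteq V$ we get $V^\perp\subseteq L^\perp$ and hence $Q_VQ_L=Q_V$, so $Q_Vw_i=Q_V(Q_Lw_i)$. The key elementary fact I would use is that for any orthogonal projection $R$ and any vectors $x_1,\ldots,x_r$ one has $G(Rx_1,\ldots,Rx_r)\le G(x_1,\ldots,x_r)$: indeed the Gram matrix of the $x_i$ equals the sum of the Gram matrices of the $Rx_i$ and of the $(I-R)x_i$, the latter being positive semidefinite, and the determinant is monotone on the cone of positive semidefinite matrices. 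Applying this with $R=Q_V$ to the vectors $Q_Lw_i$ yields
$$
G(Q_Vw_1,\ldots,Q_Vw_{k-1})\le G(Q_Lw_1,\ldots,Q_Lw_{k-1}).
$$
Combining the three displays gives \eqref{eq20} with $c_{\vec s}=1/D_{\vec s}$.

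The only genuinely delicate point is the reduction to the inclusion $L\subseteq V$, that is, verifying that the chosen indicators $\1_{[s_i;s_{i+1}]}$ span a space containing all step functions of $\Ker A$, so that the boundary intervals are correctly accounted for by the convention fixing $s_1,\ldots,s_N$. Once this is in place, the argument is purely the two applications of Lemma \ref{lem4} together with the monotonicity of the Gram determinant under orthogonal projection, and, crucially, no estimate depending on $\vec t$ is required: the constant $c_{\vec s}$ is manufactured entirely from the fixed system $v_1,\ldots,v_{N-1}$.
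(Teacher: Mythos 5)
Your argument is correct, and it arrives at the bound by a genuinely different mechanism than the paper's. The paper proceeds by induction on $k$: it peels off $\1_{[t_1;t_2]}$ using the factorization of a Gram determinant as the squared distance from the first vector to the span of the remaining ones times the Gram determinant of the remaining ones, and compares the two distances $d_1\geq\rho_1$ via exactly the inclusion $LS\{f_1,\ldots,f_s\}\subseteq LS\{\1_{[s_1;s_2]},\ldots,\1_{[s_{N-1};s_N]}\}$ that you single out as the delicate point. You instead apply Lemma~\ref{lem4} globally to both sides, reducing them to $G(Q_Lw_1,\ldots,Q_Lw_{k-1})$ and $G(v_1,\ldots,v_{N-1})\,G(Q_Vw_1,\ldots,Q_Vw_{k-1})$ respectively, and then compare the projected Gram determinants through $Q_VQ_L=Q_V$ and the monotonicity of the determinant on the positive semidefinite cone. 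Both routes hinge on the same span inclusion (justified in the paper, as in your write-up, only by the convention that the endpoints of the supporting intervals of the $f_j$ are counted among $s_1,\ldots,s_N$), and both yield the same explicit constant $c_{\vec s}=\prod_{j=1}^{N-1}(s_{j+1}-s_j)^{-1}$. What your version buys is the elimination of the induction and the reuse of Lemma~\ref{lem4}; the price is the additional (standard, and correctly justified) inequality $G(Rx_1,\ldots,Rx_r)\leq G(x_1,\ldots,x_r)$ for an orthogonal projection $R$, of which the paper's one-vector distance comparison $d_1\geq\rho_1$ is the special case it actually needs. Your remark that the linearly dependent case of the $w_i$ requires no separate treatment is also consistent with how the paper's chain of equalities degenerates there.
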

\begin{proof}
Note that for any $i=\ov{1,s}$
$$
f_i\in LS\{\1_{[s_j; s_{j+1}]},\ j=\ov{1, N-1}\}.
$$
Let us prove the statement of the lemma by induction.
Let $d_i$ be a distance from $\1_{[t_i; t_{i+1}]}$ to $LS\{\1_{[t_{i+1};t_{i+2}]}, \ldots,\1_{[t_{k-1};t_{k}]},f_1,\ldots, f_s \}$ and $\rho_i$ be a distance from $\1_{[t_i; t_{i+1}]}$ to\break $LS\{\1_{[t_{i+1};t_{i+2}]}, \ldots,\1_{[t_{k-1};t_{k}]},
\1_{[s_{1};s_{2}]}, \ldots,\1_{[s_{N-1};s_{N}]} \}.$ Then in case $k=2$
$$
G(\1_{[t_1;t_2]},f_1, \ldots, f_s)=d^2_1\geq
\rho^2_1\
G\Big(
\frac{\1_{[s_1;s_2]}}
{\sqrt{s_2-s_1}}, \ldots, \frac{\1_{[s_{N-1};s_N]}}
{\sqrt{s_N-s_{N-1}}}\Big)=
$$
$$
=
c_{\vec{s}}\
G(\1_{[t_1;t_2]}, \1_{[s_{1};s_{2}]}, \ldots,\1_{[s_{N-1};s_{N}]}).
$$
Assume that \eqref{eq20} holds for $k.$ Then
$$
G(\1_{[t_1;t_2]}, \ldots, \1_{[t_{k-1};t_k]},f_1, \ldots, f_s)=d^2_1\
G(\1_{[t_2;t_3]}, \ldots, \1_{[t_{k-1};t_k]},f_1, \ldots, f_s)\geq
$$
$$
\geq\rho^2_1\  c_{\vec{s}}\ G(\1_{[t_2;t_3]}, \ldots, \1_{[t_{k-1};t_k]}, \1_{[s_{1};s_{2}]}, \ldots,\1_{[s_{N-1};s_{N}]})=
$$
$$
=
c_{\vec{s}}\
G(\1_{[t_1;t_2]}, \ldots, \1_{[t_{k-1};t_k]}, \1_{[s_{1};s_{2}]}, \ldots,\1_{[s_{N-1};s_{N}]}).
$$
\end{proof}
\begin{lem}
\label{lem9}
For an arbitrary $0<s_1<\ldots<s_N<1,\ \delta>0$ the following integral
$$
\int_{\Delta^\delta_k}
\frac{1}
{G(\1_{[t_1;t_2]}, \ldots, \1_{[t_{k-1};t_k]}, \1_{[s_{1};s_{2}]}, \ldots,\1_{[s_{N-1};s_{N}]}}dt_1\ldots dt_k
$$
converges.
\end{lem}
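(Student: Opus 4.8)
The plan is to use that both families of indicators are orthogonal systems, reduce the Gram determinant to a product of squared sines of the principal angles between two fixed finite-dimensional subspaces, and then localize and estimate the vanishing of this product on a codimension $\ge 2$ ``coincidence'' set. Write $V_t=LS\{\1_{[t_1;t_2]},\ldots,\1_{[t_{k-1};t_k]}\}$ and $V_s=LS\{\1_{[s_1;s_2]},\ldots,\1_{[s_{N-1};s_N]}\}$, and let $P_s$ be the orthogonal projection onto $V_s$. Since $(\1_{[t_i;t_{i+1}]},\1_{[t_{i'};t_{i'+1}]})=0$ for $i\ne i'$ and the same holds for the $s$-indicators, each of the two diagonal blocks of the joint Gram matrix is itself diagonal, and a Schur-complement computation (equivalently Lemma \ref{lem4} applied with $P=P_s$, after rescaling $\1_{[s_j;s_{j+1}]}$ to an orthonormal basis of $V_s$) gives
\[
G(\1_{[t_1;t_2]},\ldots,\1_{[t_{k-1};t_k]},\1_{[s_1;s_2]},\ldots,\1_{[s_{N-1};s_N]})
=\prod_{j=1}^{N-1}(s_{j+1}-s_j)\prod_{i=1}^{k-1}(t_{i+1}-t_i)\prod_{l}\sin^2\theta_l,
\]
where $\theta_l$ are the principal angles between $V_t$ and $V_s$. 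On $\Delta^\delta_k$ the two gap products lie between $\delta^{k-1}\prod_j(s_{j+1}-s_j)$ and $1$, so it will suffice to prove $\int_{\Delta^\delta_k}\prod_l\sin^{-2}\theta_l\,d\vec t<+\infty$.

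Next I would describe the zero set. The determinant vanishes exactly when $V_t\cap V_s\ne\{0\}$, i.e. when some nonzero step function lies in both spans. Such a function is compactly supported with at least two jumps, and every jump point must belong simultaneously to $\{t_1,\ldots,t_k\}$ and to $\{s_1,\ldots,s_N\}$; hence at least two coincidences $t_{a}=s_{p}$, $t_{b}=s_{q}$ must occur, and distinct coincidences cannot share the same $t_i$. Consequently $Z:=\{\vec t\in\Delta^\delta_k:G=0\}$ is contained in a finite union of affine subspaces, each of codimension $\ge 2$; on a stratum carrying $r$ coincidences one finds $\dim(V_t\cap V_s)=r-1$, so exactly $r-1$ of the angles $\theta_l$ tend to $0$ there.

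The quantitative step is a local lower bound for the degenerating angles. Writing a coincidence deviation as $\xi=t_a-s_p$, the model computation
\[
\|(I-P_s)\1_{[t_a;t_b]}\|^2=\big(|t_a-s_p|+|t_b-s_q|\big)+O\!\big((t_a-s_p)^2+(t_b-s_q)^2\big)
\]
shows that the distance from the near-common block $\1_{[t_a;t_b]}$ to $V_s$ vanishes only linearly in the deviations. Granting that these blocks realize, up to a constant, the smallest alignment of $V_t$ with $V_s$, one gets on each $r$-coincidence stratum a bound $\prod_l\sin^2\theta_l\ge c\prod_{l=1}^{r-1}\ell_l(\xi)$ with $\ell_l$ coupled linear forms in $|\xi_1|,\ldots,|\xi_r|$. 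Away from a fixed neighbourhood of $Z$ the determinant is bounded below by continuity on the compact complement, so the integral splits; near $Z$ the tangential $k-r$ variables range over a bounded set, while the transverse integral $\int(\prod_{l=1}^{r-1}\ell_l)^{-1}d\xi$ over $\{|\xi|\le R\}\subset\mbR^{r}$ converges because the adjacency coupling in the $\ell_l$ prevents the individual logarithmic singularities from accumulating, and since $r\ge 2$ only integrable powers of logarithms survive.

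The main obstacle is the uniform version of the quantitative step: proving that the blocks $\1_{[t_a;t_b]}$ are, up to constants independent of $\vec t$, the worst-aligned unit vectors of $V_t$ relative to $V_s$, uniformly over all strata of $Z$, and in particular that at higher-order coincidences, where several principal angles degenerate at once, the order of vanishing of $\prod_l\sin^2\theta_l$ matches the codimension so that the transverse integral still converges. Establishing this uniform lower bound on the smallest principal angle in terms of $\mathrm{dist}(\vec t,Z)$ is the heart of the argument; the closedness statement of Lemma \ref{lem6} is precisely the device that rules out spurious limiting degeneracies when one passes to these uniform estimates.
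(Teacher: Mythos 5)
Your outline is genuinely different from the paper's, and its first two stages are sound: the factorization of the joint Gram determinant into the two gap products times $\prod_l\sin^2\theta_l$ is correct (both families of indicators are orthogonal systems, so this is just the Schur complement after normalization), and the description of the zero set $Z$ as a finite union of affine strata of codimension at least two (each forcing at least two coincidences $t_a=s_p$, $t_b=s_q$) is also correct. But the argument has a genuine gap exactly where you flag it, and that gap is the whole difficulty. The statement you ``grant'' --- that the near-common blocks $\1_{[t_a;t_b]}$ realize, up to a constant uniform over $\vec t\in\Delta^\delta_k$, the worst alignment of $V_t$ with $V_s$, and that the order of vanishing of $\prod_l\sin^2\theta_l$ matches the codimension of the stratum --- is not a routine compactness consequence. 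The quantity you must control from below is the full determinant $\det(I-\wt{C}^{\,T}\wt{C})$, not just the smallest principal angle; near intersections of strata several angles degenerate simultaneously and the constants in any stratum-by-stratum bound can themselves degenerate as $\vec t$ approaches a higher-order coincidence configuration. Lemma \ref{lem6} (closedness of the set of step functions with a bounded number of jumps) rules out qualitative degeneracies in a limit, but it does not by itself produce the quantitative, uniform lower bound $\prod_l\sin^2\theta_l\geq c\prod\ell_l(\xi)$ (or even $\geq c\,|\xi|^{r-1}$) that your transverse integration requires. Without that estimate the convergence of $\int_{\Delta^\delta_k}\prod_l\sin^{-2}\theta_l\,d\vec t$ is not established.

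For comparison, the paper avoids this geometric problem entirely: it identifies $\bigl((2\pi)^{k+N-2}G(\vec t,\vec s)\bigr)^{-1}$ with the expectation $E\prod_{j}\delta_0(w(t_{j+1})-w(t_j))\prod_i\delta_0(w(s_{i+1})-w(s_i))$ for a planar Wiener process $w$, rewrites this via $\int_{\mbR^2}\prod_j\delta_0(w(t_j)-u)\,du$ and $\int_{\mbR^2}\prod_i\delta_0(w(s_i)-v)\,dv$ as an integral of a product of Gaussian transition densities along the merged, ordered time points, and then integrates out the $t$-variables one interleaving order at a time. Each such integration produces at worst a factor $f_0(\|u-v\|)$ with a logarithmic singularity and Gaussian decay, and the final $(u,v)$-integral converges. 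This representation converts the delicate lower bound on the Gram determinant into explicit one-dimensional kernel estimates, which is why the paper never needs a uniform principal-angle inequality. If you want to complete your route, the missing ingredient is a proof that for unit vectors $f\in V_t$ the distance $\|(I-P_s)f\|$ is bounded below by a constant times a power of $\mathrm{dist}(\vec t,Z)$ with constants uniform over $\Delta^\delta_k$, together with a count showing the resulting order of vanishing never exceeds $r-1$ on an $r$-coincidence stratum; as written, that step is asserted rather than proved.
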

\begin{proof}
Denote by
$$
G(\1_{[t_1;t_2]}, \ldots, \1_{[t_{k-1};t_k]}, \1_{[s_{1};s_{2}]}, \ldots,\1_{[s_{N-1};s_{N}]})=G(\vec{t}, \vec{s}), \ \vec{t}\in\Delta^\delta_k.
$$
Note that
$$
\frac{1}{(2\pi)^{k+N-2}}\frac{1}{G(\vec{t}, \vec{s})}=
$$
$$
=E\prod^{k-1}_{j=1}\delta_0(w(t_{j+1})-w(t_j))\prod^{N-1}_{i=1}\delta_0(w(s_{i+1})-w(s_i))=
$$
$$
=
E\int_{\mbR^2}\prod^{k}_{j=1}\delta_0(w(t_{j})-u)du\int_{\mbR^2}\prod^{N}_{i=1}\delta_0(w(s_{i})-v)dv,
$$
where $w$ is a planar Wiener process. Denote by $r_1<r_2<\ldots<r_{k+N}$ the points $t_1, \ldots, t_k, s_1, \ldots, s_N$ which are ordered by increasing. Put
$$
\theta(r_i)=\begin{cases}
u, &r_i\in\{t_1, \ldots, t_k\}\\
v, &r_i\in\{s_1, \ldots, s_N\}
\end{cases}.
$$
Then
$$
E\int_{\mbR^2}\prod^{k}_{j=1}\delta_0(w(t_{j})-u)du\int_{\mbR^2}\prod^{N}_{i=1}\delta_0(w(s_{i})-v)dv=
$$
$$
=
\int_{\mbR^{2\times2}}p_{r_1}(\theta(r_1))
\prod^{k+N-1}_{l=1}p_{r_{l+1}-r_l}(\theta(r_{l+1})-\theta(r_l))dudv.
$$
Here $p_q(y)=\frac{1}{2\pi q}e^{-\frac{\|y\|^2}{2q}},\ q\in[0; 1],\ y\in\mbR^2.$ Let us check that the following integral
$$
\int_{\Delta^\delta_k}p_{r_1}(\theta(r_1))
\prod^{k+N-1}_{l=1}p_{r_{l+1}-r_l}(\theta(r_{l+1})-\theta(r_l))d\vec{t}
$$
converges. It suffices to fix some order of $\{r\}$ and check the integrability on that fixed subset of $\Delta^\delta_k$. For example,
$\{r\}=\{0<t_1<s_1<t_2<t_3<t_4<s_2<\ldots\}.$ One can check that the following estimates hold.

1) $$
\int^1_{s_N}p_{t_k-s_N}(u-v)dt_k=\frac{1}{2\pi}\int^1_{s_N}
\frac
{e^{-\frac{\|u-v\|^2}{2(t_k-s_N)}}}
{t_k-s_N}
dt_k\leq
$$
$$
\leq c\int^{+\infty}_1\frac{1}{r}e^{-\frac{r\|u-v\|^2}{2}}dr=
$$
$$
=c\int^{+\infty}_{\frac{\|u-v\|^2}{2}}
\frac{1}{r}e^{-r}dr\leq
$$
$$
\leq
\wt{c}\ (\ln\|u-v\|\1_{\big\{\frac{\|u-v\|^2}{2}<1\big\}})+e^{-\frac{\|u-v\|^2}{2}}\1_{\big\{\frac{\|u-v\|^2}{2}<1\big\}}=f_0(\|u-v\|),
$$
where $c,\  \wt{c}$ are positive constants and
$$
\int_{\mbR^2}f_0(u)^pdu<+\infty
$$
for all $p>0.$

2)
$$
\int^1_{t_{k-1}+\delta}p_{t_k-t_{k-1}}(0)dt_k=\int^1_{t_{k-1}+\delta}
\frac{1}{2\pi(t_k-t_{k-1})}dt_k\leq c\ \ln\delta
$$

3)
$$
\int^{s_N}_{s_{N-1}}p_{t_k-s_{N-1}}(u-v)p_{s_N-t_{k}}(u-v)dt_k\leq
$$
$$
\leq
c\ \frac{2}{s_N-s_{N-1}}\int^{s_N}_{s_{N-1}}(u-v)dt_k\leq\wt{c}\ f_0(u-v);
$$

4)
$$
\int^{s_N}_{t_{k-1}+\delta}p_{t_k-t_{k-1}}(0)p_{s_N-t_k}(u-v)dt_k\leq
$$
$$
\leq c\ \frac{1}{\delta}\int^{s_N}_{t_{k-1}}p_{s_N-t_k}(u-v)dt_k\leq\wt{c}\ f_0(u-v).
$$
Therefore, if $s_1<t_1,$ then
$$
\int_{\{r\}}p_{r_1}(\theta(r_1))\prod^{k+N-1}_{l=1}p_{r_{l+1}-r_l}(\theta(r_{l+1})-\theta(r_l))d\vec{s}\leq
$$
$$
\leq c\ p_{s_1}(v)f_0(u-v)^m,
$$
where constant $c$ can depend on $\vec{s}$ and $m$ equals to a number of pairs $s$ and $t$ in sequence $r_1,\ldots,r_n.$

If $t_1<s_1,$ then
$$
\int_{\{r\}}p_{r_1}(\theta(r_1))\prod^{k+N-1}_{l=1}p_{r_{l+1}-r_l}(\theta(r_{l+1})-\theta(r_l))d\vec{s}\leq
$$
$$
\leq c\int^1_0p_{t_1}(u)f_0(u-v)^mdt_1\leq\wt{c}f_0(u)f_0(u-v)^m.
$$
Consequently,
$$
\int_{\mbR^{2\times2}}\int_{\{r\}}p_{r_1}(\theta(r_1))\prod^{k+N-1}_{l=1}p_{r_{l+1}-r_l}(\theta(r_{l+1})-\theta(r_l))
d\vec{s}dudv\leq
$$
$$
\leq \begin{cases}
c\int_{\mbR^{2\times2}}p_{s_1}(v)f_0(u-v)^mdudv\\
\wt{c}\int_{\mbR^{2\times2}}f_0(u)f_0(u-v)^mdudv,
\end{cases}
$$
where integrals in last estimate converge.
\end{proof}
The following statements related to $k$-multiple SILT for planar Gaussian integrator are consequences of Theorem \ref{thm3}.
\begin{thm}
\label{thm4}
Let $x$ be a planar Gaussian integrator generated by continuous linear operator in $L_2([0; 1])$ which satisfies conditions 1), 2). Then there exists the Fourier--Wiener transform of $k$-multiple SILT of $x$ on domain off the diagonals.
\end{thm}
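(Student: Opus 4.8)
The plan is to read the Fourier--Wiener transform of the self-intersection local time on the off-diagonal domain directly off the formal expression \eqref{eq3} and to control it by Theorem \ref{thm3}. I would define the off-diagonal SILT as the $L_2$-limit, as $\ve\to0$, of the approximating family restricted to $\Delta^\delta_k$,
$$
T^x_{\ve,k}(\Delta^\delta_k)=\int_{\Delta^\delta_k}\prod^{k-1}_{i=1}f_\ve(x(t_{i+1})-x(t_i))d\vec{t},
$$
and first compute, exactly as in the derivation of \eqref{eq3}, its transform $\cT(T^x_{\ve,k}(\Delta^\delta_k))(h_1,h_2)$ as a Gaussian integral over $\Delta^\delta_k$. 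For $\ve>0$ this is an $\ve$-regularized version of the integrand of \eqref{eq3}, in which the Gram determinant $G(\Delta g(t_1),\ldots,\Delta g(t_{k-1}))$ is replaced by a strictly positive factor carrying an extra $\ve$ on the diagonal, so the transform is well defined and no singularity is present.

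The central and easiest step is the observation that the exponential factor
$$
e^{-\frac12(\|P_{t_1\ldots t_k}h_1\|^2+\|P_{t_1\ldots t_k}h_2\|^2)}
$$
is bounded by $1$ for every $\vec{t}$ and every $h_1,h_2\in L_2([0;1])$. Hence the integrand of \eqref{eq3} is dominated on $\Delta^\delta_k$ by
$$
\frac{1}{(2\pi)^{k-1}G(\Delta g(t_1),\ldots,\Delta g(t_{k-1}))},
$$
which is integrable over $\Delta^\delta_k$ by Theorem \ref{thm3}. Passing to the limit $\ve\to0$ by dominated convergence with this $\ve$-free function as majorant, I would conclude that $\cT(T^x_{\ve,k}(\Delta^\delta_k))$ converges pointwise in $(h_1,h_2)$ to the absolutely convergent integral \eqref{eq3} restricted to $\Delta^\delta_k$. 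Thus the formal Fourier--Wiener transform of the off-diagonal SILT is a genuinely well-defined function of $(h_1,h_2)$.

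To upgrade this into the existence of a square-integrable random variable whose transform is \eqref{eq3} on $\Delta^\delta_k$, I would show that $\{T^x_{\ve,k}(\Delta^\delta_k)\}$ is Cauchy in $L_2$. Computing $E\,T^x_{\ve,k}(\Delta^\delta_k)T^x_{\ve',k}(\Delta^\delta_k)$ yields, after the same Gaussian calculation, an integral over $\Delta^\delta_k\times\Delta^\delta_k$ whose kernel is controlled by the reciprocal of the Gram determinant of the \emph{combined} family of increments $A\1_{[t_i;t_{i+1}]}$ and $A\1_{[s_j;s_{j+1}]}$. I expect the main obstacle to lie exactly here: although each family is kept at distance $\delta$ from its own diagonal, the two families may still collide with one another, so the off-diagonal estimate of Theorem \ref{thm3} does not apply verbatim to the union. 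The remedy is to re-run the geometric analysis of Section 3 for the merged system of indicator increments together with an orthonormal basis of $\Ker A$: vanishing of the combined Gram determinant again forces step functions into $\Ker A$, of which there are only finitely many as in Lemma \ref{lem1}, and the positivity of the relevant distances (Lemmas \ref{lem7}--\ref{lem9}) gives the integrability of the cross term. Granting this, the family is Cauchy, its $L_2$-limit $T^x_k(\Delta^\delta_k)$ exists, and by the uniqueness of the Fourier--Wiener transform \cite{14} its transform coincides with \eqref{eq3} restricted to $\Delta^\delta_k$, which proves the theorem.
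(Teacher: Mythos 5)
Your argument is correct and matches the paper's intent: Theorem \ref{thm4} is stated there as an immediate consequence of Theorem \ref{thm3}, and your central observation --- that the exponential factor is bounded by $1$, so the integrand of \eqref{eq3} restricted to $\Delta^\delta_k$ is dominated by $\frac{1}{(2\pi)^{k-1}G(\Delta g(t_1),\ldots,\Delta g(t_{k-1}))}$, which is integrable over $\Delta^\delta_k$ by Theorem \ref{thm3} --- is exactly the intended proof. Your final paragraph on the $L_2$-Cauchy property of the approximating family is really the content of the separate Theorem \ref{thm5}, which the paper proves in precisely the way you anticipate: the mixed second moment is bounded by the integral of the reciprocal Gram determinant of the combined family of increments, whose finiteness is obtained by rerunning the arguments of Theorem \ref{thm3}.
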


Denote by
$$
T^x_{\ve,k,\delta}=\int_{\Delta^\delta_k}\prod^{k-1}_{i=1}f_\ve(x(t_{i+1})-x(t_i))d\vec{t}.
$$
\begin{thm}
\label{thm5}
Let $x$ be a planar Gaussian integrator generated by continuous linear operator $A$ in $L_2([0;1])$ which satisfies conditions 1), 2). Then there exists
$$
T^x_{k,\delta}=L_2\mbox{-}\lim_{\ve\to0}T^x_{\ve, k, \delta}.
$$
\end{thm}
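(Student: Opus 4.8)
The plan is to obtain the limit through the Cauchy criterion in $L_2$. It suffices to prove that the mixed moments $E\,T^x_{\ve,k,\delta}T^x_{\ve',k,\delta}$ converge to one and the same finite number as $\ve,\ve'\to0$; indeed, then
$$
E\big(T^x_{\ve,k,\delta}-T^x_{\ve',k,\delta}\big)^2=E(T^x_{\ve,k,\delta})^2-2E\,T^x_{\ve,k,\delta}T^x_{\ve',k,\delta}+E(T^x_{\ve',k,\delta})^2\to0,
$$
so that $\{T^x_{\ve,k,\delta}\}$ is Cauchy and the required $L_2$ limit exists.

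First I would write each kernel through its Fourier transform, $f_\ve(z)=\frac{1}{(2\pi)^2}\int_{\mbR^2}e^{i(\lambda,z)-\frac{\ve}{2}\|\lambda\|^2}d\lambda$, insert this into the product defining $T^x_{\ve,k,\delta}$, and take the expectation in $\xi_1,\xi_2$. Because $x(t_{i+1})-x(t_i)=((A\1_{[t_i;t_{i+1}]},\xi_1),(A\1_{[t_i;t_{i+1}]},\xi_2))$, this expectation is a Gaussian integral in the dual variables whose quadratic form is assembled from the inner products of the increments $A\1_{[t_i;t_{i+1}]}$ and $A\1_{[s_j;s_{j+1}]}$. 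Carrying out the computation gives
$$
E\,T^x_{\ve,k,\delta}T^x_{\ve',k,\delta}=\int_{\Delta^\delta_k\times\Delta^\delta_k}\Phi_{\ve,\ve'}(\vec t,\vec s)\,d\vec t\,d\vec s,
$$
where $\Phi_{\ve,\ve'}$ is a Gaussian integral in $4(k-1)$ variables carrying the extra factor $e^{-\frac{\ve}{2}\|\lambda\|^2-\frac{\ve'}{2}\|\mu\|^2}$. This factor increases monotonically to $1$ as $\ve,\ve'\downarrow0$, hence $\Phi_{\ve,\ve'}$ increases to
$$
\frac{1}{(2\pi)^{2(k-1)}\,G\big(A\1_{[t_1;t_2]},\ldots,A\1_{[t_{k-1};t_k]},A\1_{[s_1;s_2]},\ldots,A\1_{[s_{k-1};s_k]}\big)}.
$$
By monotone convergence the mixed moments then tend, along any path $\ve,\ve'\to0$, to the integral of this limiting density, so the whole theorem reduces to the single estimate
$$
\int_{\Delta^\delta_k\times\Delta^\delta_k}\frac{d\vec t\,d\vec s}{G\big(A\1_{[t_1;t_2]},\ldots,A\1_{[s_{k-1};s_k]}\big)}<+\infty .
$$

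To prove this estimate I would repeat, for the doubled system of $2(k-1)$ increments, the reduction already used for Theorem \ref{thm3}. With $P$ the projection onto $\Ker A$ and $AP=0$, Lemma \ref{lem5} applied to the restriction of $A$ to $(\Ker A)^\perp$ (continuously invertible by condition 2) bounds the combined Gram determinant below by $c\,G\big((I-P)\1_{[t_1;t_2]},\ldots,(I-P)\1_{[s_{k-1};s_k]}\big)$, and Lemma \ref{lem4} rewrites the latter as $c\,G\big(\1_{[t_1;t_2]},\ldots,\1_{[s_{k-1};s_k]},e_1,\ldots,e_n\big)$ with $e_1,\ldots,e_n$ an orthonormal basis of $\Ker A$. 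Splitting $\Ker A$ into its step-function subspace and its complement and applying Lemmas \ref{lem7} and \ref{lem8} replaces these basis vectors by the indicators $\1_{[\sigma_l;\sigma_{l+1}]}$ of the fixed jump points $0<\sigma_1<\ldots<\sigma_N<1$ of the kernel, leaving a pure indicator Gram determinant. Its reciprocal is integrated by the probabilistic argument of Lemma \ref{lem9}, now extended to two free families of times $\vec t,\vec s$ together with the fixed family $\vec\sigma$: passing to the planar Wiener process one introduces three spatial anchors, orders the merged time points, and bounds the product of transition densities by iterated heat-kernel estimates of the type (1)--(4) in Lemma \ref{lem9}, each free time integration producing a factor $f_0$ belonging to every $L^p(\mbR^2)$.

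The hard part will be the behaviour along the diagonal $\vec t\approx\vec s$ of the product simplex, a source of degeneracy absent from Theorem \ref{thm3}: when the two simplices nearly coincide the $2(k-1)$ increments become linearly dependent and the combined Gram determinant vanishes. One has to check that this vanishing is only of first order in the displacement between the simplices — as is seen directly for $k=2$, where $G(\1_{[t_1;t_2]},\1_{[s_1;s_2]})$ behaves like $(s_2-s_1)\big(|t_1-s_1|+|t_2-s_2|\big)$ near $t_i=s_i$ — so that the resulting singularity, of the form $1/(|t_1-s_1|+|t_2-s_2|)$ in the transverse variables, remains integrable, and to keep the heat-kernel estimate uniform over the interleaving order when a free time of one family is squeezed between two neighbouring times of the other. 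Once this is settled, the three integrals in the Cauchy difference share the common limit, $E\big(T^x_{\ve,k,\delta}-T^x_{\ve',k,\delta}\big)^2\to0$, and $T^x_{k,\delta}=L_2\mbox{-}\lim_{\ve\to0}T^x_{\ve,k,\delta}$ exists.
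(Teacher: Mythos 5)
Your proposal follows essentially the same route as the paper: the paper likewise reduces the $L_2$ convergence to the convergence of the mixed moments $ET^x_{\ve_1,k,\delta}T^x_{\ve_2,k,\delta}$, bounds the resulting integrand by the reciprocal of the Gram determinant of the doubled family of increments $A\1_{[t_1;t_2]},\ldots,A\1_{[s_{k-1};s_k]}$, invokes the argument of Theorem \ref{thm3} for the integrability of that bound, and concludes by dominated convergence (where you use monotone convergence). The diagonal degeneracy $\vec t\approx\vec s$ that you rightly flag as the delicate point is exactly what the probabilistic representation of Lemma \ref{lem9} absorbs --- each integration over a free time squeezed near a time of the other family produces a factor $f_0$ lying in every $L^p(\mbR^2)$ --- and the paper itself dispatches this with the single phrase ``the same arguments as in Theorem \ref{thm3}''.
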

\begin{proof}
To prove the theorem it suffices to check that there exists finite limit of $ET^x_{\ve_1,k,\delta}T^x_{\ve_2,k,\delta}$ as $\ve_1, \ve_2\to0.$

Note that
$$
ET^x_{\ve_1,k,\delta}T^x_{\ve_2,k,\delta}=E\int_{\Delta^\delta_k}\prod^{k-1}_{i=1}f_{\ve_1}(x(t_{i+1})-x(t_i))d\vec{t}
$$
$$
\int_{\Delta^\delta_k}\prod^{k-1}_{j=1}f_{\ve_2}(x(s_{j+1})-x(s_j))d\vec{s}=
$$
\begin{equation}
\label{eq21}
=
\int_{\Delta^\delta_k\times\Delta^\delta_k}
\frac{1}
{(2\pi)^{2k-2}\det(C_{t_1\ldots t_ks_1\ldots s_k}+I(\ve_1, \ve_2))}d\vec{t}d\vec{s},
\end{equation}
where $I(\ve_1, \ve_2)$ is the following matrix
$$
\begin{pmatrix}
\ve_1&&0&\vdots&&&\\
&\ddots&&\vdots&&0&\\
0&&\ve_1&\vdots&&&\\
\hdotsfor{7}\\
&&&\vdots&\ve_2&&0\\
&0&&\vdots&&\ddots&\\
&&&\vdots&0&&\ve_2
\end{pmatrix}.
$$
Here $C_{t_1\ldots t_ks_1\ldots s_k}$ is the Gramian matrix constructed from
$$
A\1_{[t_1;t_2]}, \ldots, A\1_{[t_{k-1}; t_k]},A\1_{[s_1;s_2]}, \ldots, A\1_{[s_{k-1}; s_k]}.
$$
One can check that \eqref{eq21} less or equal to
\begin{equation}
\label{eq22}
\int_{\Delta^\delta_k\times\Delta^\delta_k}
\frac{d\vec{t}d\vec{s}}
{G(A\1_{[t_1;t_2]}, \ldots, A\1_{[t_{k-1}; t_k]},A\1_{[s_1;s_2]}, \ldots, A\1_{[s_{k-1}; s_k]})}.
\end{equation}
The same arguments as in Theorem \ref{thm3} lead to the finiteness of the last integral. Now the dominated convergence theorem finishes the proof.
\end{proof}
\vskip30pt

\vskip30pt
\noindent
Institute of Mathematics,
National Academy of Sciences of Ukraine,\newline
Tereshchenkivska Str. 3, Kiev 01601, Ukraine
\vskip15pt
\noindent
{\it E-mail address}: adoro@imath.kiev.ua

\noindent
{\it E-mail address}: olaizyumtseva@yahoo.com

\end{document}